\numberwithin{equation}{section}
\numberwithin{figure}{section}
\newcommand{\cW}{\mathcal{W}}
\newcommand{\cD}{\mathcal{D}}
\newcommand{\eps}{\varepsilon}
\newcommand{\CO}{{\mathcal O}}
\newcommand{\N}{{\mathbb N}}
\newtheorem{theorem}{Theorem}[section]
\newtheorem{lemma}[theorem]{Lemma}
\newtheorem{definition}[theorem]{Definition}
\newtheorem{assumption}[theorem]{Assumption}
\newtheorem{prop}[theorem]{Proposition}
\newtheorem{remark}[theorem]{Remark}
\newtheorem{conj}[theorem]{Conjecture}
\newcommand{\db}[1]{{#1}}
\newcommand{\da}[1]{{#1}}
\newcommand{\dbz}[1]{{#1}}
\begin{document}
\title[Sharp interface limit for Stochastic Cahn-Hilliard equation]{
The Sharp interface limit for the\\ Stochastic Cahn-Hilliard
equation}

\author[Antonopoulou]{D.C.~Antonopoulou$^{\$*}$}
\address{$^{\$}$ Department of Mathematics, University of Chester,
Thornton Science Park, CH2 4NU, UK}
\author[Bl\"omker]{D.~Bl\"omker$^{\ddag}$}
\address{$^{\ddag}$Institut f\"ur Mathematik Universit\"at Augsburg, D-86135 Augsburg.}
\author[Karali]{G.D.~Karali$^{\dag *}$}
\address{$^{\dag}$ Department of Mathematics and Applied Mathematics,
University of Crete, Heraklion, Greece.}
\address{$^{*}$ Institute of Applied and Computational Mathematics,
FO.R.T.H., GR--711 10 Heraklion, Greece.}

\email{d.antonopoulou@chester.ac.uk, dirk.bloemker@math.uni-augsburg.de, gkarali@uoc.gr}

\subjclass{35K55, 35K40, 60H30, 60H15.}

\date{\today}

\keywords{Multi-dimensional stochastic
Cahn-Hilliard equation, additive noise, stochastic sharp interface limit, Hele Shaw problem, interface motion.}

\begin{abstract}
We study the $\varepsilon$-dependent two and three dimensional
stochastic Cahn-Hilliard equation in the sharp interface limit
$\varepsilon\rightarrow 0$. The parameter $\varepsilon$ is
positive and measures the width of transition layers generated
during phase separation. We also couple the noise strength to
this parameter. Using formal asymptotic expansions, we identify
the limit. In the right scaling, our results indicate that the
stochastic Cahn-Hilliard equation converge to a Hele-Shaw problem
with stochastic forcing on the curvature equation. In the case
when the noise is sufficiently small, we rigorously prove that the
limit is a deterministic Hele-Shaw problem. Finally, we discuss
which estimates are necessary in order to extend the rigorous
result to larger noise strength. \bigskip

\noindent{\sc R\'esum\'e:} Nous \'etudions l' \'equation Cahn-Hilliard
stochastique d\'ependante en $\eps$, pos\'ee en dimensions deux et
trois dans la limite de l'interface nette $\eps\rightarrow 0$. Le
param\`etre $\eps$ est positif et mesure la largeur de couches de
transition g\'en\'er\'ees pendant la s\'eparation de phase. Nous
avons aussi coupl\'e la puissance de bruit \`a ce param\`etre.
\`A l'aide de s\'eries asymptotiques formelles, nous
d\'eterminons la limite. Dans l'\'echelle propre, nos r\'esultats
indiquent que l' \'equation Cahn-Hilliard stochastique converge
vers un probl\`eme Hele-Shaw avec une force stochastique
pr\'esente \`a l'\'equation de la courbure. En cas de bruit
suffisamment petit, nous prouvons rigoureusement que la limite
est un probl\`eme Hele-Shaw d\'eterministe. Finalement, nous
discutons quelles estimations sont n\'ecessaires afin de
prolonger le r\'esultat rigoureux en pr\'esence de bruit d'une
puissance plus grande.
\end{abstract}

\maketitle

%
\section{Introduction}
%
%
In this paper we consider the sharp interface limit of the
stochastic Cahn-Hilliard equation
\begin{equation}
\label{main}
\partial_t u=\Delta
(-\eps \Delta
 u+\eps^{-1}f^{\prime}(u)) +\db{ \eps^\sigma\dot{\mathcal{W}}(x,t),}
\end{equation}
\dbz{for times $t\in[0,T]$}
subject to Neumann boundary conditions on a bounded domain in $\mathcal{D}\subset\mathbb{R}^d$, $d\in\{2,3\}$
\begin{equation}\label{ncond}
\displaystyle\frac{\partial u}{\partial
n}=\displaystyle\frac{\partial \Delta u}{\partial n}=0 \,\,\,\,\,
\mbox{on} \,\,\,\, \partial\mathcal{D}\;.
\end{equation}
\dbz{Here $u: \mathcal{D}\times[0,T]\to\mathbb{R}$ is the scalar concentration field of one of the components in a separation process,
for example of binary alloys.}

We always assume that \dbz{the spatial domain} $\mathcal{D}$  has a sufficiently piece-wise smooth boundary.
The typical example for the bistable nonlinearity is
$f^{\prime}(u):=\partial_u f(u)=u(u^2-1)$, where the primitive
$f:=\frac{1}{4}(u^2-1)^2$ is a double well potential with equally deep wells taking
its global minimum value $0$ at the points $u=\pm 1$.
The small parameter $\eps$ measures an atomistic interaction length
that fixe\db{s} the length-scale of transition layers between $1$ and $-1$.
Obviously\db{,} the solution $u$ depends on $\eps$, which we usually suppress in the notation,
but wherever needed we denote the solution $u$ by $u_{\eps}$.

\db{The forcing $\dot{\mathcal{W}}$ denotes \da{a} Gaussian noise,
which is white in time and possibly colored in space. Finally,
the noise-strength $\eps^\sigma$ scales with the atomistic length
scale, where $\sigma$ is some parameter.}

\db{In applications, there are often different sources of the
noise. If we split \dbz{the noise} $\eps^\sigma \dot{\mathcal{W}}=
\Delta\dot{\mathcal{V}}+\dot{W}$, then the term
$\dot{\mathcal{V}}$ arises from fluctuations in the chemical
potential}\dbz{, while $\dot{W}$ models fluctuations directly in
the concentration.  Here,} \da{$\dot{\mathcal{V}}$} needs to be a
spatially smooth space-time noise since it is under the
Laplacian, while the additive noise $\dot W$  might be rougher.

The stochastic Cahn-Hilliard equation is a model for the
non-equilibrium dynamics of metastable states in phase
transitions, \cite{cook,hohenberg,langer}.
The deterministic Cahn-Hilliard equation with \db{$\dot{\mathcal{W}}=0$}
has been extended to a stochastic version by Cook, \cite{cook}
(see also in \cite{langer}), incorporating thermal fluctuations in
the form of an additive noise. Such a generalized Cahn-Hilliard
model, \cite{Gurtin}, is based on the balance law for micro-forces.
In this case, the additive term \db{$\dot{\mathcal{V}}$} in the chemical potential
is given by fluctuations in an external field.
See \cite{hohenberg, Gurtin}; cf. also  \cite{Kitahara},  where the external gravity field is
modeled. The \dbz{noise $\dot W$, which is independent of the} free energy, stands for the
Gaussian noise in Model B of \cite{hohenberg}, in accordance with
the original Cahn-Hilliard-Cook model.

We can model \dbz{the  noise} either as the formal derivative of
a Wiener process in the sense of Da Prato \& Zabczyck
\cite{DPZa:14} or as a derivative of a Brownian sheet in the
sense of Walsh, \cite{Wal86}. Dalang \& Quer-Sardanyons
\cite{DaQS11} showed that both approaches are actually
equivalent\da{. Thus, we} focus on the $Q$-Wiener-process \da{in}
the sense of \cite{DPZa:14}, which may be  defined by a series in
the right orthonormal basis with independent coefficients given
by a sequence of real valued Brownian motions.

Existence of stochastic solution for the problem \eqref{main} has
been established under various assumptions on the
noise-terms\da{, see} for example in  \cite{DPDe:96,El:91}; we
also refer to the results presented in
\cite{Weber22,Weber12,ak,DBMPWa:01}.

%
\subsection{Phase transitions and noise}
%
%
Concerning the Cahn-Hilliard equation  posed in one
dimension, in \cite{abk} the authors analyzed the stochastic
dynamics of the front motion \dbz{of one dimensional interfaces.} In the absence of noise we refer also to
\cite{BaXu:95,BaXu94} for the dynamics \dbz{of interfaces}  and \dbz{the} construction of a
finite dimensional manifold parametrized by the interface
positions. This is a key tool for studying the stochastic case,
but it fails in dimension two and three, as interfaces are no
longer points, but \da{curves or surfaces respectively}.

An interesting result is \cite{BeBrBu:14}, where,
on the unbounded domain, a single interface moves according to a
fractional Brownian motion, which is in contrast to the usual Brownian motion in most of the \dbz{other} examples.
Note that the one-dimensional case is
significantly simpler, since the solutions can be fully
parametrized by their finitely many zeros. Therefore, one needs
only to consider the motion of solutions along a finite
dimensional slow manifold, and the stability properties along
such a manifold; see for example \cite{BaLuZe:08}.
Here we try to follow similar ideas, despite the fact that the driving manifold is infinite dimensional\dbz{,
and parametrized by the curves or surfaces of the zero level set.}

A slightly simpler model, due to the absence of mass-conservation,
is the stochastic Allen Cahn-equation, the so called Model A of
\cite{hohenberg}. The interface motion of stochastic systems of
Allen-Cahn type have been analyzed in \cite{faris}. In
\cite{Fun95,bmp}, the authors studied the stochastic
\dbz{one-dimensional} Allen-Cahn equation with initial data close
to one instanton or interface and proved that, under an
appropriate scaling, the solution will stay close to the
instanton shape, while the random perturbation will create a
dynamic motion for this single interface. This is observed on a
much faster time scale than in the deterministic case. This
result has been also studied in \cite{weber2} via an invariant
measure approach.

If the initial data involves more than one interfaces, it is
believed that these interfaces exhibit \dbz{also} a random movement,
which is much quicker than in the deterministic case, while
different interfaces should annihilate when they meet,
\cite{fatkullin}. We also refer to \cite{Sha:00} or
\cite{HaLy:00}. The limiting process should be related to a
Brownian one (cf. in \cite{fontes} for formal arguments). A full description of all the ideas for the
analysis of the interface motion based on \cite{chen} and
\cite{abk} is presented in \cite{weber:phd}. In \cite{rogweb} the authors considered
the stochastic Allen-Cahn equation driven by a
multiplicative noise; they prove tightness of solutions
for the sharp interface limit problem, and show convergence to
phase-indicator functions; cf. also in \cite{weber2} for the
one-dimensional case with an additive space-time white noise for
the proof of an exponential convergence towards a curve of
minimizers of the energy.

The space-time white noise driven Allen-Cahn equation is known to
be ill-posed in space dimensions greater than one, \cite{Wal86,
DPZa:14}, and a renormalization is necessary to properly define
the solutions. We refer to \cite{Ha:14,HaWe:14} for more details.

For a multi-dimensional stochastic Allen-Cahn equation driven by
a mollified noise, in \cite{hiweb}, it is shown that as the
mollifier  is removed, the solutions converge weakly to
zero, independently of the initial condition. If the the noise
strength converges to zero at a sufficiently fast rate, then the
solutions converge to those of the deterministic equation,
and the behavior is well described by the  Freidlin-Wentzell theory.
\dbz{See also \cite{BeWe:P}.}
In
\cite{weber1}, for a \dbz{regularized noise, which is white in the limit,} the author
extending the classical result of Funaki \cite{Fun99} to spatial
dimensions more than two, derived motion by mean curvature with an
additional stochastic forcing for the sharp interface limit
problem. Recently, in \cite{AAKM}, for the case of an additive
`mild' noise in the sense of \cite{Fun99,weber1}, the first
rigorous result on the generation of interface for the stochastic
Allen-Cahn equation has been derived; the authors proved layer
formation for general initial data, and established that the
solution's profile near the interface remains close to that of a
(squeezed) traveling wave, which means that a spatially uniform
noise does not destroy this profile.
%
%
\subsection{\db{Main Results}}
%
%
Due to phase separation, the solution $u$ of the stochastic
Cahn-Hilliard equation \eqref{main}, which is related to the mass
concentration, tends to split in regions where $u\approx\pm1$ and
with inner layers of order $\mathcal{O}(\eps)$ between them. We
shall study the motion of such layers in their sharp interface
limit $\eps\rightarrow 0$. \textit{The rigorous complete
description of the motion of interfaces in dimensions two and
three stands for many years as a wide open question}. With this
paper, we tried to contribute towards a full answer by providing
\db{first some formal} asymptotics for \db{the case of general
noise strength}. \db{We \da{were} able to identify the
intermediate regime $\sigma=1$ when in the limit
$\eps\to0$\da{,} the limiting solutions satisfy a
stochastic
 Hele-Shaw problem, \da{ (see \eqref{H-S-nh})}.
While for smaller noise strength $\sigma>1$ we conjecture that the limit solves a deterministic Hele-Shaw problem.}

In addition, as a first \db{rigorous} step, we derive
the sharp interface limit \dbz{given by the deterministic Hele-Shaw problem in the case} when the noise is sufficiently small,
\da{ (see Theorem \ref{thm:main})}. Unfortunately, we
are not able to treat the case $\sigma$ close to $1$,
\da{ we refer to Remark \ref{rem1}}.  A key problem is
that spectral problems of the linearized operators are not yet
understood in the \da{general setting which is} necessary for
the proof, (\db{see Remark \ref{rem2}).}

\db{The result for \da{the} Cahn-Hilliard equation in the
deterministic case (when $\dot{\mathcal{W}}=0$)} has been
already studied in \cite{abc}. Given a solution of the deterministic  Hele-Shaw
problem, the authors constructed an approximation of (\ref{main}) without noise
admitting this solution, as the
interface moves between $\pm1$. The analysis thereof will be the
foundation of some of our results. The main technical problem of
this strategy, is that the manifold of
possible approximations is parametrized by an infinite
dimensional space of closed curves \dbz{or surfaces}.
Furthermore, the spectral
information provided so far for the linearized problem, necessary
for a qualitative study of the approximation is insufficient.
This is due mainly to the fact that most of the larger
eigenvalues actually are related to the fast motion of the
interface itself\dbz{, which do not obstruct the stability of the manifold}.
Thus, this approximation can only be valid  on
time-scales of order $\mathcal{O}(1)$.
\db{\da{ We refer to} the main result in Theorem \ref{thm:main}, where we
approximate solutions of the stochastic
Cahn-Hilliard equation by solutions of the deterministic Hele-Shaw problem
only for very small noise and only on $\mathcal{O}(1)$-time-scales.}

A simpler case is the motion of droplets for the two-dimensional Cahn-Hilliard or
the mass conserving Allen-Cahn equation. Here, the solutions can
be fully parametrized by finite dimensional data, namely the
position and radius of the droplets. See \cite{AlChFu:00,
BaJi:12}, and \cite{AnBaBlKa:15} (stochastic problem), for
droplets on the boundary, and \cite{AlFuKa:04, AlFu:98} for
droplets in the domain in the absence of noise. The approximation
in these cases is valid for very long time-scales.


\subsection{Outline of the paper}


In Section \ref{sec:formal}, we present the formal
derivation of the stochastic Hele-Shaw problem from (\ref{main})
and identify the \dbz{order of the} noise strength that \dbz{leads} to a nontrivial \dbz{stochastic} modification
of the \dbz{deterministic} limiting problem.

In Section \ref{sec:main}, we provide a rigorous definition of the
setting and state the main results, which we then prove in
Section \ref{sec:proof}. We concentrate on small noise strength
and show in that case, that solutions of (\ref{main})  in
the sharp interface limit of $\eps\rightarrow 0$  are well approximated by a
Hele-Shaw problem. We will see that the main limitation towards a
better approximation result is the lack of good bounds on the
linearized operator.


\section{Formal asymptotics}
\label{sec:formal}

In this section we  present some formal matching asymptotics
applied \dbz{to} \eqref{main} that establish a first intuition
towards a rigorous proof for the stochastic sharp interface limit.
We remark first that
the stochastic C-H equation  \eqref{main} can be written as a
system\db{, where $v$ is the chemical potential.}
\begin{equation}\label{system}
\left\{
\begin{array}{lll}
\partial_t u & = & -\Delta v+ \eps^\sigma \dot{\mathcal{W}},\\
\\
v & = & -\displaystyle\frac{f^{\prime}(u)}{\eps}+\eps \Delta u.
\end{array}
\right.
\end{equation}

Formally in \cite{akk}, and later more rigorously in \cite{ako}
using a Hilbert expansions method, the asymptotic behavior for
$\eps\rightarrow 0$ of the following deterministic system has
been analyzed:
\begin{equation*}\label{system2}
\left\{
\begin{array}{lll}
\partial_t u\dbz{_\varepsilon} & = & -\Delta v\dbz{_\varepsilon}+G_1,\\
\\
v\dbz{_\varepsilon} & = & -\displaystyle\frac{f^{\prime}(u\dbz{_\varepsilon})}{\eps}+\eps \Delta
u\dbz{_\varepsilon}+G_2,
\end{array}
\right.
\end{equation*}
where now $G_1(x,t;\eps)$ and $G_2(x,t;\eps)$ are deterministic
forcing terms. The sharp interface limit problem in the
multidimensional case demonstrated a local influence in phase
transitions of forcing terms that stem from the chemical
potential, while free energy independent terms act on the rest of
the domain. In addition, the forcing may slow down the
equilibrium.

Given an initial smooth closed
$\textit{n}-1$ dimensional hypersurface $\Gamma_0$ in
$\mathcal{D}$ (this definition covers also the union of closed
interfaces) then the \dbz{limiting} chemical potential
\begin{equation}\label{pot}
v:=\displaystyle{\lim_{\eps\rightarrow 0^+}}(\eps\Delta
u\dbz{_\varepsilon}-\eps^{-1}f^{\prime}(u\dbz{_\varepsilon})+G_2),
\end{equation}
 satisfies the following Hele-Shaw free
boundary problem (assuming that the limits exist)
\begin{equation}\label{nhH-S*} \left\{
\begin{aligned}
&\Delta v=\displaystyle{\lim_{\eps\rightarrow 0^+}}G_1 \,\, \mbox{ in } \mathcal{D} \backslash\Gamma(t),\;\;t>0, \\
&\partial_n v=0 \,\, \mbox{ on }\partial\mathcal{D},\\
&v=\lambda H+\displaystyle{\lim_{\eps\rightarrow 0^+}}G_2\,\,  \mbox{ on } \Gamma(t),\\
&V=\frac{1}{2}(\partial_n v^+-\partial_n v^-) \,\,  \mbox{ on } \Gamma(t), \\
&\Gamma(0)=\Gamma_0,
\end{aligned}
\right.
\end{equation}
where $\Gamma(t)$ is the zero level surface of \dbz{the limiting} $u(t)$, which  is for fixed time \dbz{$t$} a
closed $\textit{n}-1$ dimensional hypersurface of mean curvature
$H=H(t)$ and of velocity $V=V(t)$ that divides the domain $\mathcal{D}$ in
two open sets $\mathcal{D}^+(t)$ and  $\mathcal{D}^-(t)$. The
constant $\lambda$ is positive, and $n$ is the unit outward
normal vector at the inner and outer boundaries.

According to the aforementioned arguments, each perturbation
$G_i$ has a different physical meaning and appears in a different
equation when C-H is presented as a system. We will use some of
the ideas of the deterministic asymptotic analysis, but we will
see in the following that \da{when} the terms $G_i$ are noise
terms and small in $\eps$, they still have an impact on the
limiting behavior.

\subsection{Formal derivation of the stochastic Hele-Shaw
problem}\label{formal}


In order to observe the limit behavior of \eqref{main} at larger
noise strengths, we fix $\sigma=1$ as we expect the noise
strength $\eps$ to be the critical one. In order \da{to avoid
calculations} with formal noise terms, where the order of
magnitude and the definition of products is not always obvious,
we use the following change of variables
$$\hat{u}_\eps:=u_\eps - \eps{\cW}\;,
$$
where we assume that the Wi\db{e}ner process is spatially sufficiently smooth.
Taking differentials, it follows that $\hat{u}_\eps$ and $v_\eps$
solve the system
\begin{equation}\label{CH:sys_sub}
\left\{
\begin{array}{lll}
\partial_t \hat{u}_\eps & = & -\Delta v_\eps, \\
\\
v_\eps & = & -\displaystyle\frac1{\eps}
f^{\prime}(\hat{u}_\eps+\eps{\cW})+ \eps \Delta \hat{u}_\eps+
\eps^2 \Delta {\cW}.
\end{array}
\right.
\end{equation}
Observe that for spatially smooth noise on time-scales of order $1$
$$
\hat{u}_\eps=u_\eps + \mathcal{O}(\eps),
$$
is an approximate solution \dbz{of \eqref{main}} for small $\eps$. Furthermore, the main
advantage of the above representation \dbz{as a  system} is based on the fact
that (\ref{CH:sys_sub}) is now a random PDE without stochastic
differentials and all terms appearing are spatially smooth and in time at least H\"older-continuous.
Thus, we can treat all appearing quantities as
functions and analyze the equation path-wisely, i.e., for every
fixed realization of \dbz{the underlying Wiener process} $\cW$.

Therefore, we are able to follow the ideas of the formal derivation presented in
\cite{akk} and derive in the limit the following stochastic Hele-Shaw problem
\begin{equation}\label{H-S-nh}
\left\{
\begin{aligned}
&\Delta v=0\;\mbox{ in }\;\cD \backslash \Gamma(t),\;\;t>0,\\
&\partial_n v=0\;\mbox{ on }\;\partial\cD,\\
&v=\lambda H+\cW \;\mbox{ on }\; \Gamma(t),\\
&V=\frac{1}{2}(\partial_n v^+-\partial_n v^-)\;\mbox{ on }\; \Gamma(t), \\
&\Gamma(0)=\Gamma_0,
\end{aligned}
\right.
\end{equation}
where again $H$ and $V$ are the mean curvature and velocity
respectively of the zero level surface $\Gamma(t)$. For positive $\eps>0$ the domain
$\cD$ admits the following disjoint decomposition
\[
\cD=\cD_{\eps}^+(t) \cup \cD_{\eps}^-(t) \cup \cD^I_{\eps}(t),
\]
where
\[
u_\eps \approx 1 \;\;\;\; \mbox{for} \;\;\;\;  x \in \cD_{\eps}^+(t)
\;\;\;\; \mbox{and}\;\;\;\; u_\eps \approx -1 \;\;\;\; \mbox{for}
\;\;\;\; x \in \cD_{\eps}^-(t).
\]
Moreover, $\cD^I_{\eps}(t)$ is a narrow interfacial region around $\Gamma(t)$ with
thickness of order $\eps$ where $u_\eps$ is neither close to $+1$ nor $-1$.

In particular, we construct an inner solution close to the
interface, and an outer solution away from it. Using the
appropriate matching in orders of $\eps$, we formally pass to the
limit and derive the corresponding free boundary problem. To
avoid additional technicalities, we also assume that the
interface $\Gamma$ does not intersect the boundary. In terms of
simplicity of notation, we drop the subscript $\eps$ in all the
calculations that follow.

\subsection{Outer expansion}
We consider that the inner interface is known\da{,} and seek the
outer expansion far from it, i.e., an expansion in the form
\begin{equation*}
\begin{split}
\hat u & =  \hat u_0+\eps \hat u_1+ \cdots,\\
v & = v_0 +\eps v_1 + \cdots,
\end{split}
\end{equation*}
where `\dbz{+}...' denote higher order terms and $u_0,\dbz{u_1,}\ldots,v_0,\dbz{v_1,}\ldots$
are smooth functions. We insert the outer expansion into the second equation of the  stochastic system
$\eqref{CH:sys_sub}_2$ and obtain
\begin{equation}\label{exp}
\begin{split}
v_0+\eps
v_1+\mathcal{O}(\eps^2)=&-\displaystyle\frac{1}{\eps}(f^{\prime}(\hat
u_0)+\eps
f^{\prime\prime}(\hat u_0)(\hat u_1+\cW)+\mathcal{O}(\eps^2))\\
& +\eps \Delta(\hat u_0+\eps \hat u_1+\mathcal{O}(\eps^2))+\eps^2
\Delta \cW+\mathcal{O}(\eps^2).
\end{split}
\end{equation}
First collecting the \db{terms of order}
$\mathcal{O}\Big{(}\displaystyle\frac{1}{\eps}\Big{)}$   in
\eqref{exp}, we arrive at $$ f^{\prime}(\hat u_0)=0.$$
\db{Thus}\da{,} we get as in Remark 4.1, (1) of \cite{abc}
$$\hat u_0=\pm 1\;.
$$
In the \db{second} step, we collect the $\mathcal{O}(1)$\db{-}terms in \eqref{exp} and
\dbz{derive}
$$v_0=-f^{\prime\prime}(\hat u_0)(\hat
u_1+\cW)+\mathcal{O}(\eps^2).
$$
We plug now the outer expansion into the first equation of  $\eqref{CH:sys_sub}_1$ and
obtain
\begin{equation*}
\partial_t(\hat u_0+\eps \hat u_1+\mathcal{O}(\eps^2))=-\Delta (v_0+\eps
v_1+\mathcal{O}(\eps^2))\;.
\end{equation*}
 As $\hat u_0$ is a constant \dbz{in the outer expansion} we have $\partial_t \hat u_0=0,$ and
 thus\da{,}
collecting the $\mathcal{O}(1)$ terms yields
\begin{equation*}
-\Delta v_0=0 \;.
\end{equation*}
Collecting finally \db{in the third step} all $\mathcal{O}(\eps)$\db{-}terms we arrive at
\begin{equation*}
\partial_t \hat u_1=-\Delta v_1.
\end{equation*}
%
%
\subsection{Inner expansion}
%
%
Let $x$ be a point in $\cD$ that at time $t$ is near the
interface $\Gamma(t)$. Let us introduce the stretched normal
distance to the interface, $z:=\frac{d}{\eps}$, where $d(x,t)$ is
the signed distance from the point $x$ in $\cD$ to the interface
$\Gamma(t)$, such that $d(x,t)>0$ in $\cD_{\eps}^+$ and
$d(x,t)<0$ in $\cD_{\eps}^-$. Obviously $\Gamma$ has the
representation
$$
\Gamma(t)=\{x\in\cD:d(x,t)=0\}.
$$
  If $\Gamma$ is smooth, then $d$ is \dbz{well defined and} smooth near $\Gamma$, and
$|\nabla d|=1$ in a neighborhood of $\Gamma$. Following
\cite{abc} and \cite{pego}, we seek for an inner expansion valid
for $x$ near $\Gamma$ of the form
\[
\begin{array}{lll}
\hat u & = &
q\Big{(}\displaystyle\frac{d(x,t)}{\eps},x,t\Big{)}+\eps Q\Big{(}
\displaystyle\frac{d(x,t)}{\eps},x,t\Big{)} + \cdots,\\
\\
v & = & \tilde q\Big{(}\displaystyle\frac{d(x,t)}{\eps},x,t\Big{)}
+\eps \tilde Q\Big{(}\displaystyle\frac{d(x,t)}{\eps},x,t\Big{)} + \cdots,\\
\end{array}
\]
where \dbz{again} `$+\cdots$' denote higher order terms \dbz{that we neglect} and
$q,Q,\ldots,\tilde{q},\tilde{Q},\ldots$ are \dbz{sufficiently} smooth. It will be
convenient to require that the quantities depending on
$z\;,x,\;t$ are defined for $x$ in a full neighborhood of
$\Gamma$ but do not change when $x$ varies normal to $\Gamma$
with $z$ held fixed, \cite{pego}. We insert the inner expansion
into $\eqref{CH:sys_sub}_2$, utilize that $|\nabla d|^2=1$, and \dbz{thus}
obtain the following expression
\begin{equation}\label{inner}
\tilde q+\eps \tilde Q
+\mathcal{O}(\eps^2)=-\displaystyle\frac{1}{\eps}(f^{\prime}(q)+\eps
f^{\prime\prime}(q)(Q+\cW))+\eps
\Big{(}\displaystyle\frac{\partial_z q}{\eps} \Delta d+
\displaystyle\frac{\partial_{zz} q}{\eps^2}+\partial_z Q \Delta d
+\displaystyle\frac{\partial_{zz} Q}{\eps}\Big{)}+\eps^2 \Delta W.
\end{equation}
We collect the \db{terms of} order
$\mathcal{O}\Big{(}\displaystyle\frac{1}{\eps}\Big{)}$ and derive
\[
\partial_{zz} q-f^{\prime}(q)=0.
\]
By matching now the terms of order $\mathcal{O}(1)$ in
\eqref{inner}, we obtain
\[
\tilde q
=-f^{\prime\prime}(q) Q+\partial_{zz}
Q-f^{\prime\prime}(q) \cW+\partial_z q  \Delta d,
\]
or equivalently
\begin{equation}\label{linop}
\tilde q-\partial_z q \Delta d =
\partial_{zz}Q-f^{\prime\prime}(q)Q- f^{\prime\prime}(q) \cW\;.
\end{equation}
We define the linearized Allen-Cahn operator
\[
{\mathcal L} Q= \partial_{zz}Q-f^{\prime\prime}(q)Q\;.
\]
Then \eqref{linop} is written as
\begin{equation}\label{L}
\tilde q-\partial_z q \Delta d = {\mathcal L}
Q-f^{\prime\prime}(q) \cW\;.
\end{equation}
This equation is solvable if for any
$\chi \in \text{Ker}( {\mathcal L}^*)$ it holds that
$
\chi \perp (\tilde q-\partial_z q \Delta d+f^{\prime\prime}(q)
\cW),
$
or equivalently if
\begin{equation}\label{integr}
\int_{-\infty}^{\infty} \chi  \cdot(\tilde q-\partial_z q
\Delta d+f^{\prime\prime}(q) \cW) \, dz =0.
\end{equation}
Obviously, for any $x$ on $\Gamma$ it holds that $d(x,t)=0$ and $\Delta
d(x,t)=H(x,t)$. Replacing in \eqref{integr} we obtain the following sufficient
condition on the interface $\Gamma$:
\begin{equation}\label{tildeqH}
\tilde q= \lambda H-f^{\prime\prime}(q) \cW.
\end{equation}
Plugging the inner expansion into  $\eqref{CH:sys_sub}_1$ we obtain
\begin{equation}\label{innerexp}
\displaystyle\frac{\partial_z q}{\eps}d_t+\partial_z  Q
d_t+\mathcal{O}(\eps)= - \Big( \displaystyle\frac{\partial_z
\tilde q}{\eps} \Delta d+ \displaystyle\frac{\partial_{zz} \tilde
q}{\eps^2}+\partial_z \tilde Q \Delta d
+\displaystyle\frac{\partial_{zz} \tilde Q}{\eps}\Big).
\end{equation}
We collect the terms of order
$\mathcal{O} ( \eps^{-2} )$
and arrive at
\[
\partial_{zz} \tilde q =0,
\]
which implies that for some functions $a$ and $b$
\[
\tilde q = a(x,t) z+b(x,t).
\]
To proceed further, the matching condition for the inner and
outer expansions must be developed. In general, these are
obtained by the following procedure (\dbz{see }\cite{cf}). Fixing $x \in \Gamma$,
we seek to match the expansions by requiring formally for $z\to \infty$
\[
\tilde q +\eps \tilde Q+\mathcal{O}(\eps^2) =v_0+\eps
v_1+\mathcal{O}(\eps^2),
\]
and thus in order $\mathcal{O}(1)$
\[
v_0=\displaystyle\lim_{z \rightarrow \infty} \tilde q =
\displaystyle\lim_{z \rightarrow \infty}(a(x,t) z+b(x,t)).
\]
We obtain $a=0$ and thus,
$\tilde{q}=b$.   Hence, utilizing \eqref{tildeqH} we
have that on the interface
\[
v_0=\lambda H-f^{\prime\prime}(q) \cW
=\lambda H+ \cW\;,
\]
where we used that $q$ solves the Euler-Lagrange equation
\begin{equation*}
\begin{split}
&-q''(z)+f'(q(z))=0,\;\;z\in \mathbb{R},\\
&\displaystyle{\lim_{z\rightarrow\pm\infty}}q(z)=\pm 1,\;\;q(0)=0,
\end{split}
\end{equation*}
while on the inner interface with $z=d/\eps=0$ we have $f''(q)=3q^2-1=-1$ since $q(0)=0$.

 What is still missing is the evolution law, which should come from
the inner expansion. From \eqref{innerexp}, we collect the terms of order
$\mathcal{O}(1/\eps)$ and
obtain 
\[
\partial_z q  d_t= -\partial_z \tilde q \Delta d - \partial_{zz} \tilde
Q \;.
\]
Recall that $-d_t=V,$ while $\Delta d=H$ \dbz{(see for example \cite{abc})} and
integrate over $z$ from $-\infty$ to $\infty$ to \dbz{derive}
\[
-\displaystyle\int_{-\infty}^{\infty} \partial_z q V \, dz = -
\displaystyle\int_{-\infty}^{\infty} \partial_{zz} \tilde Q  \,
dz\;.
\]
From the matching conditions we get
\[
q(+\infty)=1
\quad\text{and}\quad
q(-\infty)=-1\;.
\]
Hence, we have
\[
V=\displaystyle\frac{1}{2} [ \partial_z \tilde
Q(+\infty)-\partial_z \tilde Q(-\infty) ]\;.
\]
Thus the stochastic Hele-Shaw problem \eqref{H-S-nh} is established
formally as the sharp interface limit, in the case $\sigma=1$.

\begin{remark}
In the case $\sigma>1$, we follow the same construction of inner
and outer solutions as above and obtain in the limit, the
deterministic Hele-Shaw problem, (\cite{abc})
\[
\left\{
\begin{aligned}
&\Delta v=0\;\mbox{  in  }\;\cD \backslash \Gamma(t),\;\;t>0,\\
&\partial_n v=0\;\mbox{  on  }\;\partial\cD,\\
&v=\lambda H\;\mbox{  on  }\; \Gamma(t),\\
&V=\frac{1}{2}(\partial_n v^+-\partial_n v^-)\;\mbox{  on  }\; \Gamma(t), \\
&\Gamma(0)=\Gamma_0,
\end{aligned}
\right.
\]
where $H$ and $V$ are the mean curvature and velocity
respectively of the zero level surface $\Gamma(t)$ contained in
the interfacial region $\cD^I_{\eps}(t)$.
\end{remark}
\begin{remark}
Note that the change of variables
$$\hat{u}_\eps:=u_\eps - \eps^\sigma{\cW},$$ implies that
$$\hat{u}_\eps:=u_\eps + \mathcal{O}(\eps^\sigma).$$ Thus, only if $\sigma\geq
1$, $\hat{u}$ is permitted to be expanded as in the presented
inner expansion using $q$. The key difference to the deterministic analysis
is that in the nonlinearity we have $\frac1{\eps}f(\hat{u}_\eps +\eps^\sigma \cW)$.
In case $\sigma >1$ there is no contribution of $\cW$  in an asymptotic expansion in terms of order $\CO(1)$ and $\CO(1/\eps)$,
while for $\sigma=1$ there is an impact of $\cW$ on terms of order $\CO(1)$.
\end{remark}
\begin{remark}
When $0\leq\sigma<1$ the strategy presented in this section
fails. For this case, we might think of avoiding the change of
variables and apply the formal asymptotics of \cite{akk} directly
instead. \db{If we split the noise term $\dot{\mathcal{W}}$ in
\eqref{main}, we obtain  the following stochastic system with two
noise sources \dbz{(recall that $\varepsilon^\sigma
\dot{\mathcal{W}}=\Delta \dot{V}+\dot{W}$)}
\begin{equation*}
\left\{
\begin{array}{lll}
\partial_t u & = & -\Delta v+\dot{W},\\
\\
v & = & -\displaystyle\frac{f^{\prime}(u)}{\eps}+\eps \Delta
u+\dot{V},
\end{array}
\right.
\end{equation*}
}
The sharp interface
limit \db{should coincide} to \eqref{nhH-S*}, but for
$\displaystyle{\lim_{\eps\rightarrow 0^+}}G_1$,
$\displaystyle{\lim_{\eps\rightarrow 0^+}}G_2$ replaced by
$$\displaystyle{\lim_{\eps\rightarrow
0^+}}\dot{W}(\cdot,\eps),
\;\;\;\;{\rm{and}}\;\;\;\;\displaystyle{\lim_{\eps\rightarrow
0^+}}\dot{V}(\cdot,\eps),$$ respectively. When $0<\sigma<1$ we would obtain that both these limits are $0$
and thus the limiting problem is a deterministic Hele-Shaw problem without the contribution of the noise.
But this is a very dangerous reasoning, as \dbz{the} noise terms \dbz{$\dot{V}$ and $\dot{W}$}, even if they \dbz{would be} $\eps$-independent are not of order $\CO(1)$,
and we would still expect an impact of the noise terms on the limiting problem.
\end{remark}


\section{The sharp interface limit}
\label{sec:main}


The main result of this paper is that \eqref{main}, as $\eps$
tends to zero, may have a deterministic or a stochastic profile
depending on the strength of the additive noise in terms of
$\eps$. Only large noise perturbations with $\sigma=1$ generate a
stochastic limit problem. Here\da{,} we discuss the limit for
smaller noise strength.

Let us first precisely state our problem. We 
assume that noise is induced by the
formal derivative of a $Q$-Wiener process $\cW$ in a Fourier
series representation (see \cite{DPZa:14}); for simplicity, the
only $\eps$-dependence will appear in the noise strength, and
thus, for the rest of this paper we shall use the notation
$\eps^\sigma d{\cW}(x,t)$ for the additive noise, where
$\sigma\in \mathbb{R}$ \dbz{is a scaling parameter}.

\begin{assumption}
\label{ass:W}
Let $\cW$ be a $Q$-Wiener process \db{on some probability space $(\Omega,\mathcal{A},\mathbb{P})$} such that
\[
\cW(t)=\sum_{k\in\N}\alpha_k\beta_k(t)e_k,
\]
for an orthonormal basis $(e_k)_{k\in\N}$ \db{in $L^2(\mathcal{D})$}, independent
real-valued Brownian motions $(\beta_k)_{k\in\N}$ \db{on $(\Omega,\mathcal{A},\mathbb{P})$}, and \db{real-valued}
coefficients $\alpha_k$ such that $Qe_k=\alpha_k^2  e_k$.
Furthermore, we assume that the noise \db{has some weak smoothness} in
space, i.e., $Q$ satisfies
\begin{equation}
\label{e:trace}
 \text{\rm trace}(\Delta^{-1}Q) <\infty.
\end{equation}
To deal with a mass-conserving stochastic problem, we impose the
condition
 $$\int_{\cD}\cW(t) dx =0\;.
 $$
\end{assumption}
Note that \eqref{e:trace} implies that the Wiener-process $\cW(t)$ is $H^{-1}\db{(\mathcal{D})}$-valued.
This is the minimal requirement for the approximation
theorem presented in the sequel; we might need more regularity,
in order to have the stochastic Hele-Shaw limit problem well
defined, or while performing the formal asymptotics.

\db{Recall \eqref{main} in It\^{o}-formulation}
\begin{equation}\label{CH}
d u_\eps=\Delta
(-\eps \Delta u_\eps+\eps^{-1}f^{\prime}(u_\eps)) dt + \eps^\sigma d{\cW}(x,t),
\end{equation}
associated to Neumann conditions on the boundary \dbz{of $\mathcal{D}$, so that the equation }\db{is still mass conservative.}

The following theorem \db{for the existence of solutions} is well known. See for example \cite{DPDe:96}.
\begin{theorem}
 Let $\mathcal{D}$ be a rectangle in dimensions $1,2,3$. If
 $Q=I$ or $trace(\Delta^{-1+\delta}Q)<\infty$ for
 $\delta>0$, then the following holds true:
 \begin{enumerate}
 \item
 if $u_0$ is in $H^{-1}(\mathcal{D})$, there exists a unique solution for
 the problem \eqref{main} in $C([0,T];H^{-1}(\mathcal{D}))$,
\item
if $u_0$ is in $L^{2}(\mathcal{D})$, then the solution for
 the problem \eqref{main} is in
 $L^\infty(0,T;L^{2}(\mathcal{D}))$.
 \end{enumerate}
\end{theorem}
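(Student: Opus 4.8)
The plan is to follow the splitting approach of Da Prato and Debussche \cite{DPDe:96} and reduce \eqref{main} to a pathwise problem. Writing the fourth-order linear part as $A:=\eps\Delta^2$ with the Neumann-type conditions \eqref{ncond}, and restricting throughout to the subspace of mean-zero functions (which is preserved by the flow, by \eqref{ncond} and the mass condition $\int_\cD\cW\,dx=0$ of Assumption~\ref{ass:W}), the operator $A$ is self-adjoint and positive, $-A$ generates an analytic semigroup $(S(t))_{t\ge0}$, and $\Delta\simeq A^{1/2}$ in the scale of fractional powers. Because $\cD$ is a rectangle, $A$, $\Delta$ and (when $Q=I$) also $Q$ share the explicit cosine eigenbasis, which makes all the forthcoming regularity bounds reduce to elementary estimates on Fourier multipliers. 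The first step is to introduce the stochastic convolution
\[
z(t)=\eps^\sigma\int_0^t S(t-s)\,d\cW(s),
\]
and to prove, by the factorization method, that under $Q=I$ or $\operatorname{trace}(\Delta^{-1+\delta}Q)<\infty$ the process $z$ admits a version with almost surely continuous paths in a space $X\hookrightarrow L^6(\cD)$ (the critical exponent for the cubic nonlinearity in $d=3$).

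The second step is to set $v:=u-z$, so that pathwise $v$ solves the random, stochastic-differential-free equation
\[
\partial_t v=-Av+\eps^{-1}\Delta f'(v+z),\qquad v(0)=u_0,
\]
which I would treat as a deterministic parabolic problem for each fixed $\omega$. Passing to the mild formulation and using the smoothing bound $\|A^{\theta}S(t)\|\lesssim t^{-\theta}$ together with $\Delta\simeq A^{1/2}$, the Laplacian in front of the nonlinearity contributes only an integrable singularity $t^{-\theta-1/2}$ with $\theta+\tfrac12<1$; since $d\le3$, the embedding $H^1\hookrightarrow L^6$ makes $w\mapsto f'(w)=w^3-w$ locally Lipschitz from $X$ into $L^2(\cD)$, and the regularity of $z$ secured in the first step is exactly what guarantees that $f'(v+z)$ is a well-defined, locally Lipschitz perturbation. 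A Banach fixed point in a suitably time-weighted space (to absorb the $S(t)u_0$ singularity for rough data) then yields a unique local mild solution.

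The third step is to pass from local to global and to identify the solution spaces claimed in the theorem by a priori estimates exploiting the gradient-flow structure. Testing the $v$-equation against $(-\Delta)^{-1}v$ produces the $H^{-1}(\cD)$ energy identity, in which the leading cubic term of $f'(u)=u^3-u$ gives a dissipative contribution $-\eps^{-1}\int_\cD(v+z)^3 v$ whose good sign, after splitting off the $z$-dependent cross terms (controlled using $z\in X$) and a Gronwall argument, yields the bound in $C([0,T];H^{-1}(\cD))$ for $u_0\in H^{-1}(\cD)$, proving item~(1). Testing instead against $v$ gives the analogous $L^2(\cD)$ estimate and the bound in $L^\infty(0,T;L^2(\cD))$ for $u_0\in L^2(\cD)$, proving item~(2). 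Uniqueness follows by subtracting two solutions and closing a Gronwall inequality in $H^{-1}(\cD)$, using the one-sided bound $(f'(a)-f'(b))(a-b)\ge-(a-b)^2$ and interpolation to absorb the resulting gradient term.

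I expect the main obstacle to be the interaction between the borderline cubic nonlinearity and the limited spatial regularity of the noise in dimension three: both the local fixed point and the global energy estimates require controlling $z$ in $L^6(\cD)$ uniformly in time, and it is precisely the hypothesis $\operatorname{trace}(\Delta^{-1+\delta}Q)<\infty$ -- strictly stronger than the minimal requirement \eqref{e:trace} -- that supplies the extra fractional derivative on $z$ needed to make this embedding and the integrability of the products $z^3$ and $z^2v$ hold. The assumption that $\cD$ is a rectangle is what lets one verify these regularity bounds for $z$ by a direct summation over the cosine modes rather than by abstract interpolation theory.
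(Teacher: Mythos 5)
You should first note what you are up against: the paper does not prove this theorem at all --- it is quoted as well known, with the proof deferred to Da Prato--Debussche \cite{DPDe:96}, and the paper even records their method just after Assumption \ref{ass:IC}: It\^o's formula applied to a finite-dimensional spectral Galerkin approximation, then passage to the limit. Your architecture (subtract the stochastic convolution $z$, treat the remainder pathwise, globalize by $H^{-1}$ and $L^2$ energy estimates, prove uniqueness via the one-sided bound on $f'$) is the Da Prato--Debussche scheme in spirit, but two of your steps fail as stated. The first gap is the regularity claimed for $z$ in your Step 1, which is false in precisely the hardest case the theorem allows, namely $Q=I$ in $d=3$. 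For the biharmonic semigroup one has $\mathbb{E}\|z(t)\|^2_{H^s}\lesssim \sum_k q_k\lambda_k^{s-2}$, where $-\Delta e_k=\lambda_k e_k$, $Qe_k=q_ke_k$, $\lambda_k\sim|k|^2$; for $Q=I$ this is finite only for $s<(4-d)/2$, so in $d=3$ you get $z\in H^{1/2-}$, which embeds only into $L^{3-}$ --- not into $L^6$ (needed for your local Lipschitz step), and not even into $L^4$ (needed for the cross terms $\int z^3v$ and $\int z^4$ in your energy estimate). Your own justification silently uses the \emph{other} hypothesis: $\operatorname{trace}(\Delta^{-1+\delta}Q)<\infty$ does give $z\in C([0,T];H^{1+\delta'})\subset C([0,T];L^6)$, but $Q=I$ satisfies that trace condition only in $d=1$, since $\sum_k|k|^{-2(1-\delta)}=\infty$ for $d\ge 2$. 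The two hypotheses are genuinely different: in $d=2$ your argument survives because $H^{1-}\subset L^q$ for every $q<\infty$, but the white-noise case in $d=3$ requires different techniques altogether (Green's-function/Walsh-type estimates as in Cardon-Weber \cite{Weber22}).

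The second gap is structural: your local-existence step (Banach fixed point in time-weighted spaces) cannot work for the initial data in the statement. For the cubic nonlinearity, $H^{d/2-1}$ is the scaling-critical space for the Cahn--Hilliard equation, so $u_0\in H^{-1}$ is supercritical in every dimension, and $u_0\in L^2$ is supercritical in $d=3$. Concretely, for generic $u_0\in H^{-1}(\mathcal{D})$ the optimal smoothing is $\|S(s)u_0\|_{L^6}\lesssim\|S(s)u_0\|_{H^1}\lesssim s^{-1/2}\|u_0\|_{H^{-1}}$, and this rate is sharp; hence $\|f'(S(s)u_0)\|_{L^2}\sim s^{-3/2}$, and already the first Picard iterate $\int_0^tS(t-s)\Delta f'(S(s)u_0+z(s))\,ds$ diverges, whatever singularity $(t-s)^{-\alpha}$ the semigroup contributes and whatever weights you choose, because $s^{-3/2}$ is not integrable at $s=0$. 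Measuring $f'$ in weaker norms does not rescue this: even demanding only $f'(S(s)u_0)\in L^1$ one gets $\|S(s)u_0\|^3_{L^3}\sim s^{-(6+d)/8}$, which is non-integrable for $d=2,3$. This is exactly why \cite{DPDe:96} does not use a contraction argument: existence there comes from Galerkin approximation, the It\^o formula, and compactness, exploiting the monotonicity in $H^{-1}$ of $u\mapsto-\Delta f'(u)$ --- i.e.\ the same one-sided estimate $\langle f'(a)-f'(b),a-b\rangle\ge-\|a-b\|^2_{L^2}$ that you reserve for uniqueness is what drives existence. Your proposal could be repaired by running your construction for a dense class of smooth data and extending to $u_0\in H^{-1}$ via the resulting $H^{-1}$-contraction estimate (your Gronwall-plus-interpolation argument), but as written the local step fails for both classes of initial data in the theorem, and no fixed-point variant can succeed for them.
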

Note that the previous theorem could be extended for general
Lipschitz domains in dimensions $2$ and $3$ under some additional
assumptions of minimum eigenfunctions growth, cf. the arguments
in \cite{ak}. For the analysis underlying our results \db{and to
avoid technicalities}, we will for the remainder of the paper
always assume:
\begin{assumption}
\label{ass:IC}
\db{For any initial condition in $H^{-1}(\mathcal{D})$
there} exists a unique solution for the problem \eqref{main} in $C([0,T];H^{-1}(\mathcal{D}))$,
\db{which is sufficiently regular such that we can apply It\^o-formula to the $H^{-1}$-norm.}
\end{assumption}
\db{The regularity required for this assumption is usually straightforward to verify.
For example \cite{DPDe:96} applies the It\^o-formula to a \dbz{finite dimensional} spectral Galerkin-approximation and then passes to the limit.}
Introducing the chemical potential $v_\eps$, the equation is as in the formal derivation
rewritten as a stochastic system. Indeed,
\db{
\[
 \text{for } T>0 \text{ let }
\mathcal{D}_T:=\mathcal{D}\times(0,T),
\]
}
then \eqref{CH} is written as
\begin{equation}\label{CH:sys}
\begin{split}
d u_\eps  = & -\Delta v_\eps dt+ \eps^\sigma
d{\cW}\;\;\;\;\mbox{in}\;\;\mathcal{D}_T,
\\
v_\eps  = & -\displaystyle\frac1{\eps}{f^{\prime}(u_\eps)}+ \eps
\Delta u_\eps\;\;\;\;\mbox{in}\;\;\mathcal{D}_T,
\end{split}
\end{equation}
subject to Neumann boundary conditions
$$\frac{\partial u_\eps}{\partial n}=\frac{\partial \Delta
u_\eps}{\partial n}=0
\dbz{\qquad\text{on }\partial\mathcal{D}\times(0,T)}\;.
$$
Our main analytic theorem  considers \textit{a sufficiently small
noise resulting to a deterministic sharp interface} limiting
behavior. In particular, we  analyze the case
$$
\sigma \gg \sigma_0=1,
$$
where $\sigma_0$ is the \dbz{conjectured} borderline case,
where according to our formal calculation the noise has an impact on the limiting model.
Under some assumptions on the initial condition $u_\eps(0)$, the
limit of $u_\eps$ and $v_\eps$ as $\eps\rightarrow 0$ solves
\db{the deterministic Hele-Shaw
problem on a given time interval $[0,T]$.}
We will state the precise formulation of this argument in
Theorem \ref{thm:main}, and then present the rigorous proof.

\db{While our rigorous result can only treat very small noise, }
the formal
derivation of Section \ref{formal} motivates the following conjecture
implying a \textit{stochastic} sharp interface limit:
\begin{conj}
For $\sigma=1$ the limit of $u_\eps$
and $v_\eps$ solves the stochastic Hele-Shaw problem
\begin{equation}\label{stochHS} \left\{
\begin{aligned}
&\Delta v=0\;\mbox{ in }\; \mathcal{D} \backslash\Gamma(t),\;\;t>0, \\
&\partial_n v=0\; \mbox{ on }\;\partial\mathcal{D},\\
&v=\lambda H + \cW \;\mbox{ on }\; \Gamma(t),\\
&V=\frac{1}{2}(\partial_n v^+-\partial_n v^-)\;  \mbox{ on }\; \Gamma(t), \\
&\Gamma(0)=\Gamma_0.
\end{aligned}
\right.
\end{equation}
\end{conj}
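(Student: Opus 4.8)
The plan is to mirror the proof of the deterministic approximation in Theorem~\ref{thm:main} and in \cite{abc}, but to build the approximate solution around an interface that itself evolves according to the \emph{stochastic} Hele-Shaw problem \eqref{stochHS} rather than the deterministic one. As a preliminary step one must establish short-time well-posedness and enough spatial and temporal regularity of \eqref{stochHS}: given $\Gamma_0$ and a realization of $\cW$, the Dirichlet data $v=\lambda H+\cW$ on $\Gamma(t)$, together with the harmonicity of $v$ in $\cD\setminus\Gamma(t)$ and the velocity law, determine a curvature-driven evolution of the interface with an additive stochastic forcing. Since $\cW$ is only H\"older in time, $\Gamma(t)$ inherits only H\"older-in-time regularity, so the natural framework is pathwise, i.e.\ for each fixed realization. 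This is precisely why the change of variables $\hat u_\eps:=u_\eps-\eps\cW$ of Section~\ref{formal} is indispensable: it removes the It\^{o} differential and turns \eqref{CH} into a random PDE whose coefficients are smooth in space and H\"older in time, so the entire construction can be carried out realization by realization.

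Given such a family $\{\Gamma(t)\}$, the second step is to construct a high-order matched asymptotic approximation $u^A_\eps$ built on $\Gamma(t)$, following the inner/outer expansion of Section~\ref{sec:formal} but carried to sufficiently many orders and supplemented with boundary and compatibility corrections as in \cite{abc}. By design $u^A_\eps$ solves the transformed Cahn-Hilliard system up to a residual $r_\eps$ with $\|r_\eps\|_{H^{-1}}=\CO(\eps^{k})$ for $k$ large. The third step is the remainder estimate: writing $R_\eps:=\hat u_\eps-u^A_\eps$ and applying the It\^{o} formula to $\|R_\eps\|_{H^{-1}}^2$ (licensed by Assumption~\ref{ass:IC}), the leading linear contribution is governed by the linearized Cahn-Hilliard operator $-\Delta(-\eps\Delta+\eps^{-1}f''(u^A_\eps))$, while the nonlinear terms and $r_\eps$ must be absorbed by a Gronwall argument. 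For this to close one needs a spectral lower bound, uniform in $\eps$, of the type
\[
\bigl\langle \phi,\,(-\eps\Delta+\eps^{-1}f''(u^A_\eps))\phi\bigr\rangle \ \ge\ -C\,\|\phi\|_{H^{-1}}^2,
\]
the mass-conserving analogue of the Chen estimate \cite{chen}, together with control of the cross terms produced by the $\CO(\eps)$ noise.

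The hard part --- and the reason the statement must remain a conjecture --- is precisely this spectral estimate (cf.\ Remark~\ref{rem2}). The available bounds are proved for interfaces evolving by a smooth, deterministic law, whereas here the interface fluctuates randomly and is only H\"older in time, and $u^A_\eps$ is adapted to this rough geometry. One would therefore have to extend the spectral gap uniformly over the whole random family of admissible interfaces, with constants independent of the realization of $\cW$, which is not presently available in the required generality. A second, coupled difficulty is quantitative: the Gronwall estimate loses powers of $\eps$, and these can only be reabsorbed when the noise strength $\eps^{\sigma}$ is much smaller than the residual, i.e.\ for $\sigma\gg1$ as in Theorem~\ref{thm:main}; at the critical scaling $\sigma=1$ the noise feeds the interface motion at leading order, so the naive perturbative estimate fails and a \emph{genuinely different} argument --- one treating the stochastic interface evolution and the layer profile self-consistently, rather than as a perturbation of the deterministic flow --- would be required (cf.\ Remark~\ref{rem1}).
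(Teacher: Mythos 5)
Your proposal is sound as far as it goes, and its bottom line agrees with the paper: the statement is a \emph{conjecture}, the paper offers no rigorous proof of it (see the remark immediately following it), and your diagnosis of why a proof is out of reach --- no spectral estimate for the linearization around approximations built on a random, merely H\"older-in-time interface, and failure of the perturbative closure at the critical scaling $\sigma=1$ --- is exactly the content of Remarks \ref{rem1} and \ref{rem2}. But you take a genuinely different route from the paper's own supporting argument. The paper's evidence for \eqref{stochHS} is the formal matched asymptotics of Section \ref{formal}: after the substitution $\hat u_\eps = u_\eps - \eps\cW$, the inner expansion leads to the solvability condition \eqref{integr} for the linearized Allen--Cahn operator $\mathcal{L}$, which forces $\tilde q = \lambda H - f''(q)\cW$ on the interface; since $q(0)=0$ gives $f''(q(0))=-1$, this is precisely where the boundary condition $v=\lambda H+\cW$ comes from, and the velocity law follows from matching the $\mathcal{O}(1/\eps)$ terms of the first equation. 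Your roadmap, by contrast, \emph{presupposes} the limit problem and sketches the rigorous approximation scheme (pathwise well-posedness of \eqref{stochHS}, a high-order $u^A_\eps$ built on the random interface, an $H^{-1}$ remainder estimate with spectral bound and Gronwall), i.e.\ the stochastic analogue of \cite{abc} and of the paper's Theorem \ref{thm:main}. Each has its value: the paper's derivation actually identifies the limit, and in particular explains the origin of the $+\cW$ term, but is purely formal; your scheme is the right skeleton for a rigorous convergence proof, but it cannot currently be closed, which is exactly why the statement remains a conjecture.

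Two smaller corrections. First, after the change of variables the equation for $\hat u_\eps$ is a pathwise random PDE containing no stochastic differential, so for $R_\eps=\hat u_\eps-u^A_\eps$ you would use the ordinary chain rule realization by realization; the It\^o formula (and Assumption \ref{ass:IC}) is needed only if one works with $u_\eps$ directly, as the paper does in the proof of Theorem \ref{thm:main}. Mixing the two, as your third step does, is inconsistent. Second, the relevant spectral estimate is that of \cite{Ch:94} and \cite{AlFu:93}, stated as Proposition \ref{abc:spec} in the paper, not the result of \cite{chen}; and note that even in the deterministic setting it admits an unstable eigenvalue of order $\mathcal{O}(1)$, which already restricts any argument of this type to $\mathcal{O}(1)$ time scales --- an obstruction present independently of the additional difficulty caused by the randomness of $\Gamma$.
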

\begin{remark}
In \dbz{S}ection 2 by formal asymptotics,  we only presented an indication for the correctness of the
conjecture. A rigorous proof of this conjecture
remains open at the moment. We hope to attack the problem to its
full generality in the near future.
\end{remark}

\begin{remark} Note that $\cW$ is a Wiener process,
and the equation $v=\lambda H + \cW$ on $\Gamma(t)$ appearing in
(\ref{stochHS}), has a rigorous mathematical meaning in terms of
functions. In fact, no noise is present, while a random  equation
appears on $\mathcal{D} \backslash\Gamma(t)$ in the following
sense. For any given $t$, $\Gamma(t)$ is defined by its velocity
$V$ and thus is known, and the unknown function $v$ on
$\Gamma(t)$ is a stochastic process. Thus, the problem for fixed
$t$ is posed in between the inner boundary $\Gamma=\Gamma(t)$ and
the outer boundary $\partial\mathcal{D}$ as follows
\begin{equation}
\Delta v=0\;\mbox{ in }\; \mathcal{D} \backslash\Gamma, \qquad
\partial_n v=0\; \mbox{ on }\;\partial\mathcal{D}, \qquad
v=\lambda H + \cW \;\mbox{ on }\; \Gamma,
\end{equation}
the inner boundary condition $v|_\Gamma$ being a realization of a
$t$-dependent stochastic process.
\end{remark}

 \subsection{Statement of the Main Theorem}

 In this section, we shall state the main analytic theorem of this paper,
 concerning the sharp interface limiting profile
 for sufficiently small noise strength.
To approximate the stochastic solution we  use the same approximations
 $u_\eps^A$ and $v_\eps^A$ as in \cite{abc} proposed in the absence of noise.
 For a precise definition see further below. In our proof we
 follow the ideas of the proof of their Theorem 2.1, and need to adapt the
 analysis to \da{ the} presence of noise.

The main difference \da{ concerns} the noise in the equation
for the residual
\begin{equation}
\label{def:R}
 R:=u_\eps-u_\eps^A.
\end{equation}
\db{ We show bounds for this error in our main Theorem
\ref{thm:main} below, while in Remark \ref{rem1} we comment on
the smallness of the noise necessary for the main result.
Unfortunately, we are quite far away from treating \dbz{the case} $\sigma\da{>}1$
close to $1$. Another problem is the weak estimate on the
spectral stability of the linearized operator stated in
Proposition \ref{abc:spec}. See also Remark \ref{rem2}. }

\begin{assumption}
 \label{ass:Gamma}
 Let the family  $\{\Gamma(t)\}_{t\in[0,T]}$ of smooth closed hypersurfaces
 together with the functions $\{v(t)\}_{t\in[0,T]}$
be a solution
of the deterministic Hele-Shaw problem (i.e., equation \eqref{stochHS} with $\cW=0$)
such that the interfaces do not intersect with the boundary $\partial\cD$, i.e.,\
$\Gamma(t)\subset\cD$ for all $t\in[0,T]$.
\end{assumption}
With $\Gamma$ from Assumption \ref{ass:Gamma} the  \dbz{authors in \cite{abc}}
construct a pair of approximate solutions
$(u_\eps^A,\;v_\eps^A)$,
so that $\Gamma(t)$ is the
zero level set of $u_\eps^A(t)$,  which satisfies
\begin{equation}\label{appr}
\begin{split}
d u_\eps^A  = & -\Delta v_\eps^A dt\;\;\;\;\mbox{in}\;\;\mathcal{D}_T,
\\
v_\eps^A  = & -\displaystyle\frac1{\eps}{f^{\prime}(u_\eps^A)}+
\eps \Delta u_\eps^A+r_\eps^A\;\;\;\;\mbox{in}\;\;\mathcal{D}_T,
\end{split}
\end{equation}
for boundary conditions
$$
\frac{\partial u_\eps^A}{\partial n}=\frac{\partial \Delta
u_\eps^A}{\partial n}=0\;\; \quad \text{on } \partial\mathcal{D}.
$$
We recall that
$u_\eps^A$ approximates the deterministic version of equation
\eqref{main} \dbz{(i.e., for $\cW=0$).}
The error term  $r_\eps^A$ is
bounded in terms of $\eps$, and depending on the smoothness of
$\Gamma$ and the number of approximation steps, the bound on $r_\eps^A$ can be
arbitrarily small. For details see  relation (4.30) and Theorem
4.12 in \cite{abc}.

We will summarize the results of \cite{abc} that we need for our proof in  the following Theorem.
\begin{theorem}
\label{abc:approx} Under the Assumption \ref{ass:Gamma}, for any
$K>0$ there exists a pair  \db{$(u_\eps^A,\;v_\eps^A)$} of
solutions to \eqref{appr}, such that
\[
\|r_\eps^A\|_{C^0(\cD_T)} \leq C\eps^{K-2}\;.
\]
Moreover, it holds that
\[
\|v_\eps^A - v\|_{C^0(\cD_T)} \leq C\eps\;,
\]
and finally for $x$
away from $\Gamma(t)$ \mbox{(}i.e., $\text{d}(x,\Gamma(t)) \geq c \eps$\mbox{)}
\[
|u_\eps^A(t,x)- 1| \leq C\eps \quad\text{or}\quad   |u_\eps^A(t,x)+ 1| \leq C\eps
.
\]
\end{theorem}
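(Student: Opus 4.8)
The plan is to build $(u_\eps^A,v_\eps^A)$ by matched asymptotic expansions around the given smooth Hele-Shaw solution and then to estimate the residual. Since Assumption \ref{ass:Gamma} provides a smooth family $\{\Gamma(t)\}$ staying in the interior of $\cD$, the signed distance $d(x,t)$ is smooth in a fixed tubular neighborhood $N$ of $\bigcup_t\Gamma(t)\times\{t\}$, with $|\nabla d|=1$ and $\Delta d=H$ on $\Gamma(t)$; this is the geometric input all expansions use. First I would construct an \emph{outer} expansion in $\cD\setminus N$,
\[
u^{out}=\pm1+\sum_{j\ge1}\eps^j u_j^\pm,\qquad v^{out}=\sum_{j\ge0}\eps^j v_j^\pm,
\]
insert it into the deterministic version of \eqref{appr} (with $\cW=0$, $r_\eps^A=0$) and solve order by order: at order $\mathcal{O}(\eps^{-1})$ one recovers $f'(\pm1)=0$, at order $\mathcal{O}(1)$ the field $v_0=v$ solves $\Delta v_0=0$ in $\cD\setminus\Gamma(t)$ with $\partial_n v_0=0$ on $\partial\cD$ (exactly the bulk Hele-Shaw equations), and the higher $u_j^\pm,v_j^\pm$ are determined recursively.

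Next I would construct the \emph{inner} expansion in $N$ in the stretched variable $z=d/\eps$,
\[
u^{in}=q(z)+\sum_{j\ge1}\eps^j Q_j(z,x,t),\qquad v^{in}=\tilde q_0+\sum_{j\ge1}\eps^j\tilde Q_j,
\]
exactly as in Section \ref{formal}. The leading order reproduces the standing-wave equation $q''=f'(q)$ with $q(\pm\infty)=\pm1$, and at each higher order the correction solves $\CL Q_j=g_j$ with $\CL=\partial_{zz}-f''(q)$ and $g_j$ a known function of the lower-order terms. The crucial point is the Fredholm solvability condition: since $\CL$ is self-adjoint with $\ker\CL=\mathrm{span}\{q'\}$, solvability requires $\int_{\R}g_j\,q'\,dz=0$, and these orthogonality conditions are \emph{precisely} the Hele-Shaw relations $v=\lambda H$ (cf. \eqref{tildeqH}) and $V=\tfrac12(\partial_n v^+-\partial_n v^-)$ on $\Gamma(t)$. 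Because $(\Gamma,v)$ already solves the deterministic Hele-Shaw problem by assumption, all solvability conditions hold, so each $Q_j,\tilde Q_j$ exists and, by the exponential decay of $q'$ and of the homogeneous solutions of $\CL$, can be chosen to converge exponentially to prescribed limits as $z\to\pm\infty$.

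I would then \emph{match} inner and outer in the overlap region $c\eps\le|d|\le C$ by expanding the outer fields in powers of $z=d/\eps$ and requiring term-by-term agreement; exponential convergence of $q$ and the $Q_j$ makes this consistent and fixes the remaining integration constants. Finally I would glue the two expansions with a smooth cutoff $\chi(d)$ supported in $N$, truncate after $\sim K$ terms, and \emph{define} $u_\eps^A,v_\eps^A$ so that the mass balance $\partial_t u_\eps^A=-\Delta v_\eps^A$ holds exactly, absorbing the mismatch of the second equation into $r_\eps^A:=v_\eps^A+\eps^{-1}f'(u_\eps^A)-\eps\Delta u_\eps^A$. Since the expansion solves the chemical-potential relation to order $\eps^K$ and the cutoff error is concentrated where inner and outer already agree to high order, the $\eps^{-1}$ prefactor in the nonlinearity and the $\mathcal{O}(\eps)$-width of the layer account for the loss of two powers of $\eps$, giving $\|r_\eps^A\|_{C^0(\cD_T)}\le C\eps^{K-2}$; the bound $\|v_\eps^A-v\|_{C^0}\le C\eps$ is immediate since $v_0=v$ is the leading term of the $v$-expansion, and $|u_\eps^A\mp1|\le C\eps$ away from $\Gamma(t)$ follows from the outer value $\pm1+\mathcal{O}(\eps)$ together with the exponential decay $|q(z)\mp1|\lesssim e^{-c|z|}$ on the matching zone. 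The main obstacle is twofold: verifying rigorously that the Fredholm conditions at every order close up into exactly the assumed Hele-Shaw system (this needs enough smoothness of $\Gamma$ and careful handling of the curvature and tangential-derivative terms entering $g_j$), and controlling the residual uniformly on $\cD_T$ through the $\eps^{-1}$ amplification and the cutoff region — the delicate bookkeeping carried out in relation (4.30) and Theorem 4.12 of \cite{abc}.
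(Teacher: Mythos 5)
The paper itself offers no proof of this theorem: it is imported verbatim as a summary of relation (4.30) and Theorem 4.12 of \cite{abc}, so the only meaningful comparison is with the construction actually carried out there. Your overall strategy --- matched inner/outer expansions around the given Hele-Shaw data, Fredholm solvability for $\CL=\partial_{zz}-f''(q)$, gluing by cutoffs, and a residual estimate with the $\eps^{-1}$ amplification accounted for --- is indeed the strategy of \cite{abc}.

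There is, however, a genuine gap in how you close the hierarchy. You assert that, because $(\Gamma,v)$ solves the deterministic Hele-Shaw problem, ``all solvability conditions hold, so each $Q_j,\tilde Q_j$ exists.'' That is true only at leading order, where the orthogonality condition reproduces $v=\lambda H$ on $\Gamma$ (cf.\ \eqref{tildeqH}). At every order $j\ge 1$ the right-hand side $g_j$ contains \emph{new unknowns} --- the $j$-th order bulk potential $v_j$ and corrections to the interface (equivalently, to the distance function) --- and the solvability condition is not a consequence of the assumed Hele-Shaw system; it is the equation that determines those unknowns. Concretely, the orthogonality conditions at order $j$ couple into an inhomogeneous \emph{linearized} Hele-Shaw (Mullins--Sekerka) problem for the pair consisting of $v_j$ and the $j$-th interface correction, and the recursion can proceed only if one proves existence and sufficient regularity for this nonlocal linear free-boundary problem at every order, together with exponential decay in $z$ of the resulting inner corrections so that matching and cutoff errors stay of higher order. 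This existence and regularity theory is the real content of Section 4 of \cite{abc}; without it your recursion has nothing to produce $Q_j,\tilde Q_j$ from, and generically the orthogonality condition would simply be violated. A related conflation: the velocity law $V=\tfrac12(\partial_n v^+-\partial_n v^-)$ does not arise as a Fredholm condition for $\CL$ but from integrating the $\mathcal{O}(\eps^{-1})$ terms of the mass-balance equation, as in Section \ref{sec:formal}; keeping these two mechanisms separate is exactly where the linearized interface motion enters the higher-order problems.
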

We present now the following spectral estimate, useful in our
proof; we refer to \cite{Ch:94} for dimensions larger than two,
and to \cite{AlFu:93} for dimension two. Unfortunately, this \dbz{estimate} is also the key problem
to extend the approximation result beyond time-scales of order $1$.
\begin{prop}[Proposition 3.1 of \cite{abc}]
\label{abc:spec} Let $u_\eps^A$ be the approximation given in
Theorem \ref{abc:approx}. Then for all $w\in H^1(\cD)$ satisfying
Neumann boundary conditions such that $\int_{\cD}wdx=0$, the following
estimate is valid
\[
 \int_{\cD}  [\eps |\nabla w|^2+\frac1\eps f''(u_\eps^A) w^2] dx \geq - C_0\|\nabla
 w\|^2_{L^2}.
\]
\end{prop}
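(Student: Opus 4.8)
The plan is to read the left-hand side as the second variation at $u_\eps^A$ of the Cahn--Hilliard (Ginzburg--Landau) energy $E_\eps(u)=\int_\cD[\tfrac\eps2|\nabla u|^2+\tfrac1\eps f(u)]\,dx$, so that the claim is a lower bound of the form $-C_0\|\nabla w\|_{L^2}^2$ for the linearized operator $-\eps\Delta+\tfrac1\eps f''(u_\eps^A)$. This is a classical type of estimate, and I would follow the strategy of \cite{Ch:94,AlFu:93}, reducing everything to the spectral structure of the one-dimensional linearized Allen--Cahn operator $\CL=\partial_{zz}-f''(q)$ introduced in Section~\ref{sec:formal}. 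The key structural input is that $\CL q'=0$ with $q'>0$ (differentiate the standing-wave equation $q''=f'(q)$), so $q'$ is the positive ground state of $-\CL$; since $f''(\pm1)=2$, the essential spectrum of $-\CL$ equals $[2,\infty)$, hence $0$ is a simple isolated eigenvalue and $-\CL\ge\nu>0$ holds on the $L^2(\R)$-orthogonal complement of $q'$.

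First I would localize on the $\CO(\eps)$-tube $\cD^I_\eps$ around $\Gamma(t)$. Off the tube Theorem~\ref{abc:approx} gives $u_\eps^A=\pm1+\CO(\eps)$, so $f''(u_\eps^A)\ge c>0$ there and the bulk potential term $\tfrac1\eps\int_{\cD\setminus\cD^I_\eps}f''(u_\eps^A)w^2\ge0$ may be dropped from a lower bound; all the negativity of $f''$ sits in the tube, where I would retain the full quadratic form since the normal gradient $\eps|\partial_d w|^2$ drives the cancellation below. Inside the tube I would pass to Fermi coordinates $(d,s)$, with $d$ the signed distance to $\Gamma(t)$, rescale $z=d/\eps$, and expand the volume element $J=1+\eps Hz+\CO(\eps^2)$ (with $H=\Delta d$) and the Laplacian in powers of $\eps$.

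The heart of the argument would be the slicewise splitting $w=\beta(s)\,q'(z)+w^\perp$, with $w^\perp(\cdot,s)\perp q'$ in $L^2_z$ for each $s$. On $w^\perp$ the spectral gap yields a strictly positive contribution of size $\gtrsim\eps^{-1}\|w^\perp\|_{L^2}^2$. On the kernel mode the leading normal contribution cancels exactly: testing $\partial_{zz}q'=f''(q)q'$ against $q'$ gives $\int_\R|q''|^2\,dz+\int_\R f''(q)|q'|^2\,dz=0$, and the $\CO(\eps)$ curvature correction drops by parity of $q$ in $z$, leaving from the kernel mode only small positive tangential terms plus curvature and approximation remainders (the latter controlled by $u_\eps^A-q=\CO(\eps)$ from Theorem~\ref{abc:approx}). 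Estimating the cross terms $B(\beta q',w^\perp)$ and these remainders by Cauchy--Schwarz against the positive bulk and gap terms leaves at worst a negative multiple of the full Dirichlet energy, which is precisely the admissible loss $-C_0\|\nabla w\|_{L^2}^2$. The mean-zero constraint $\int_\cD w=0$ from Assumption~\ref{ass:W} is the inherited mass-conservation condition and is used to discard the neutral constant direction in this bookkeeping.

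The step I expect to be the main obstacle is exactly this last one. After the leading cancellation one is left with a borderline quadratic form in $\beta$ and its tangential gradient, and the curvature- and cross-terms must be absorbed into the available positive quantities without losing more than $\|\nabla w\|_{L^2}^2$; handling the transition region, where the Fermi coordinates degenerate, adds to the difficulty. The resulting bound is deliberately crude -- it controls the negative part by the full Dirichlet energy rather than by a weaker norm -- and this crudeness is what confines the subsequent approximation to $\CO(1)$ time-scales; improving it, or the constant $C_0$, is the spectral issue flagged in Remark~\ref{rem2}.
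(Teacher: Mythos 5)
You cannot be compared line-by-line against anything in this paper, because the paper contains no proof of this proposition: it is imported verbatim as Proposition 3.1 of \cite{abc}, with the underlying proofs referenced to \cite{Ch:94} (dimensions larger than two) and \cite{AlFu:93} (dimension two). Judged against those references, your outline follows the same classical route --- localization to the interfacial tube, Fermi coordinates, fiberwise splitting $w=\beta(s)q'(z)+w^\perp$, the one-dimensional spectral gap on $\{q'\}^\perp$, and the parity cancellation on the kernel mode --- and the structural facts you invoke ($\CL q'=0$, $q'>0$, essential spectrum $[2,\infty)$, a gap contribution of size $\eps^{-1}\|w^\perp\|^2_{L^2}$ after rescaling) are all correct.

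The genuine gap is in the norm you end up with. You absorb every remainder into ``at worst a negative multiple of the full Dirichlet energy,'' i.e.\ you prove the inequality with $-C_0\|\nabla w\|^2_{L^2}$ on the right. That matches the literal wording of the proposition, but the wording is a transcription slip: what \cite{abc} actually prove, and what this paper actually needs, is the stronger bound with the negative norm $-C_0\|\nabla(-\Delta)^{-1}w\|^2_{L^2}=-C_0\|w\|^2_{H^{-1}}$ on the right-hand side. This is visible in the passage from \eqref{e:apKEY} to \eqref{e:ap2}: there the quantity $\eps\|\nabla R\|^2+\eps^{-1}\langle R,f''(u^A_\eps)R\rangle$ must be bounded below by $-C\|\nabla\psi\|^2$ with $\psi=(-\Delta)^{-1}R$ as in \eqref{def:psi}, so that the Gronwall argument closes in the single quantity $\|\nabla\psi\|^2$. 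With your version, \eqref{e:ap2} would instead carry $C_0\|\nabla R\|^2$ on its right-hand side, which nothing in Section \ref{sec:proof} controls, and the proof of Lemma \ref{lem:45} would collapse. Note also that the Dirichlet-energy version you target is genuinely easier than the cited result: it already follows from the Allen--Cahn $L^2$ spectral bound $\int_\cD(|\nabla w|^2+\eps^{-2}f''(u^A_\eps)w^2)\,dx\geq -C\|w\|^2_{L^2}$ (also contained in \cite{Ch:94}) upon multiplying by $\eps$ and applying Poincar\'e with $\int_\cD w\,dx=0$. The hard content of \cite{abc,Ch:94,AlFu:93} is precisely to push the loss down to $H^{-1}$: the kernel-mode coefficient $\beta$ and all cross and curvature terms must be estimated against $\|w\|_{H^{-1}}$ rather than $\|w\|_{L^2}$ or $\|\nabla w\|_{L^2}$, and your Cauchy--Schwarz absorption scheme cannot deliver that. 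Finally, your closing remark misattributes the $\mathcal{O}(1)$-time-scale restriction to this crudeness of norm; as Remark \ref{rem2} explains, it stems from the $\mathcal{O}(1)$ size of the constant $C_0$ (an order-one unstable eigenvalue), which persists even in the sharp $H^{-1}$ form of the estimate.
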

\db{Our main theorem provides bounds for the residual $R$:}
\begin{theorem}
\db{
 \label{thm:main}{\rm \textbf{(Main Theorem)}}
 Let Assumption \ref{ass:W} for the noise and \ref{ass:IC} for the existence of solutions be true.
 }

\db{
Fix a time $T>0$ and any $p\in(2,3]$, a radius $\eps^\gamma$ with
\[
\gamma > \frac1{p-2} \Big[ 1+ \frac{2p+d(p-2)}{2p-d(p-2)}\cdot \frac{p+2}{p}  \Big]
\]
and a noise strength $\eps^\sigma$ with
\[
\sigma > \gamma + \frac{2p+d(p-2)}{2p-d(p-2)}\cdot \frac{p+2}{p}\;.
\]
}

 \db{Then for some small $\kappa>0$ and a generic constant $C>0$ there is for all large $\ell>0$ a constant $C_\ell$ such that the following is true:}

 \db{For all
 families  $\{\Gamma(t)\}_{t\in[0,T]}$ of closed hypersurfaces satisfying Assumption \ref{ass:Gamma},
with  corresponding  approximation  $u_\eps^A$ and  $v_\eps^A$ defined above in Theorem \ref{abc:approx}.
and  $u_\eps$ solution of the stochastic Cahn-Hilliard
 equation \eqref{CH} such that $u_\eps(0)=u_\eps^A(0)$,
 the following probability estimates hold:
 \[
 \mathbb{P} \left( \|R \|_{L^p([0,T]\times\cD)}
  \leq  \eps^\gamma \right)   \geq 1-C_\ell\eps^\ell  \;,
\]
\[
  \mathbb{P} \left( \|R \|^2_{L^\infty (0,T,H^{-1})}
  \leq  C[ \eps^{p\gamma-1}
+  \eps^{\sigma+\gamma-\kappa} ]\right)   \geq 1-C_\ell\eps^\ell \;,
\]
and
\[
  \mathbb{P} \left(  \|R \|^2_{L^2 (0,T,H^{1})}
  \leq  C[ \eps^{-1-2/p + 2\gamma} +
\eps^{-1+\sigma+\gamma-\kappa} ]\right)   \geq 1-C_\ell\eps^\ell  \;,
\]
where $R:=u_\eps-u_\eps^A$ is the error defined in \eqref{def:R}.
}
\end{theorem}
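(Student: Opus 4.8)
The plan is to derive a closed evolution equation for the residual $R=u_\eps-u_\eps^A$ and to run an energy estimate in the $H^{-1}$-norm, combined with a stopping-time/continuation argument that uses the space-time $L^p$-bound as an a priori hypothesis. Subtracting \eqref{appr} from \eqref{CH:sys} and Taylor-expanding $f'(u_\eps)-f'(u_\eps^A)=f''(u_\eps^A)R+3u_\eps^A R^2+R^3$ gives
\begin{equation*}
dR=-\Delta\Big(-\tfrac1\eps\big[f''(u_\eps^A)R+3u_\eps^A R^2+R^3\big]+\eps\Delta R-r_\eps^A\Big)\,dt+\eps^\sigma d\cW .
\end{equation*}
Since $\cW$, $u_\eps$ and $u_\eps^A$ all conserve mass, $R$ is mean-zero and $\|R\|_{H^{-1}}$ is well defined. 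First I would apply the It\^o-formula to $\tfrac12\|R\|_{H^{-1}}^2$ (justified by Assumption \ref{ass:IC}); using $\langle(-\Delta)^{-1}R,-\Delta(\cdot)\rangle=\langle R,\cdot\rangle$ the drift becomes
\begin{equation*}
-\int_\cD\Big[\eps|\nabla R|^2+\tfrac1\eps f''(u_\eps^A)R^2\Big]dx-\frac3\eps\int_\cD u_\eps^A R^3\,dx-\frac1\eps\int_\cD R^4\,dx-\int_\cD R\,r_\eps^A\,dx ,
\end{equation*}
while the noise contributes the It\^o-correction $\tfrac12\eps^{2\sigma}\,\mathrm{trace}((-\Delta)^{-1}Q)$, finite by \eqref{e:trace}, together with a martingale $dM=\eps^\sigma\langle(-\Delta)^{-1}R,d\cW\rangle$.

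Next I would estimate the four drift terms. The quadratic form is exactly the one controlled by the spectral estimate of Proposition \ref{abc:spec}, the residual term is absorbed by Young's inequality using $\|r_\eps^A\|_{C^0}\le C\eps^{K-2}$ from Theorem \ref{abc:approx} (choosing $K$ large), and the quartic $-\tfrac1\eps\int R^4$ has a favourable sign. The dangerous cubic term $\tfrac3\eps\int u_\eps^A R^3$ is where the a priori bound enters: on the random stopping interval $[0,\tau]$, $\tau:=\inf\{t:\|R\|_{L^p([0,t]\times\cD)}>\eps^\gamma\}\wedge T$, one controls $\int_0^t\|R\|_{L^3}^3$ by H\"older against $\|R\|_{L^p}\le\eps^\gamma$ and a higher $L^q$-factor controlled through $H^1$. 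Collecting terms and integrating yields, on $[0,\tau]$, simultaneous bounds for $\|R\|_{L^\infty(H^{-1})}^2$ and for the gradient integral $\int_0^T\eps\|\nabla R\|_{L^2}^2$, i.e.\ the $L^2(H^1)$-bound, the powers of $\eps$ being precisely the right-hand sides claimed in Theorem \ref{thm:main}.

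The continuation step then closes the argument. By Gagliardo--Nirenberg interpolation between $H^{-1}$ and $H^1$, $\|R\|_{L^p}\le C\|R\|_{H^{-1}}^{1-\theta}\|R\|_{H^1}^{\theta}$ with $\theta=\tfrac{2p+d(p-2)}{4p}$, so that $\tfrac{\theta}{1-\theta}=\tfrac{2p+d(p-2)}{2p-d(p-2)}$; a H\"older estimate in time (producing the factor $\tfrac{p+2}{p}$) turns this into a bound for $\|R\|_{L^p([0,\tau]\times\cD)}$ in terms of $\|R\|_{L^\infty(H^{-1})}$ and $\|R\|_{L^2(H^1)}$. Inserting the bounds from the previous step and using the hypotheses on $\gamma$ and $\sigma$ shows that $\|R\|_{L^p([0,\tau]\times\cD)}\le\tfrac12\eps^\gamma$ on a good event, so that by continuity $\tau=T$ there; this is exactly the arithmetic that forces the stated inequalities for $\gamma$ and $\sigma$. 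Finally, the probability $1-C_\ell\eps^\ell$ comes from controlling the martingale $M$ and the stochastic convolution: since the noise is Gaussian and scales with $\eps^\sigma$, exponential (Burkholder--Davis--Gundy) tail estimates combined with Markov's inequality give, for every $\ell$, a good event of probability at least $1-C_\ell\eps^\ell$ on which all the deterministic estimates above hold.

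The main obstacle is the weakness of the spectral estimate in Proposition \ref{abc:spec}: its right-hand side is $-C_0\|\nabla w\|_{L^2}^2$ rather than the scale-invariant $-C_0\|w\|_{H^{-1}}^2$ one would want. Consequently the coercive gradient term $\eps\|\nabla R\|_{L^2}^2$ is entirely consumed in controlling the indefinite quadratic part $\tfrac1\eps\int f''(u_\eps^A)R^2$, leaving no dissipation to absorb the leftover $\|\nabla R\|_{L^2}^2$; one is forced to retain only crude bounds with negative powers of $\eps$ (the $\eps^{-1}$ losses visible in the $L^2(H^1)$-estimate) and to pay for them with the a priori $L^p$-smallness and, crucially, with a very small noise strength $\sigma\gg1$. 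This is precisely why the case $\sigma$ close to $1$ cannot be reached here (Remark \ref{rem1}); improving it would require a sharper spectral bound with the $H^{-1}$-norm on the right (Remark \ref{rem2}).
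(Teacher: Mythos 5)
Your proposal follows the same skeleton as the paper's proof: the stopping time based on the space-time $L^p$-norm of $R$, the It\^o formula for $\tfrac12\|R\|^2_{H^{-1}}$ via $\psi=(-\Delta)^{-1}R$ (your drift computation agrees with \eqref{e:d_psiR2}), a spectral-estimate/Gronwall argument for the $L^\infty(H^{-1})$ bound, Burkholder--Davis--Gundy plus Chebyshev for the martingale $\eps^\sigma\int\langle\psi,d\cW\rangle$ on $[0,T_\eps]$, and the interpolation-plus-continuation step (your exponent $\theta=\frac{2p+d(p-2)}{4p}$ matches the paper's two-step Sobolev/interpolation argument). However, there is a genuine gap in how you obtain the $L^2(0,T;H^1)$ bound, and it propagates into the thresholds for $\gamma$ and $\sigma$. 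The spectral estimate bounds the \emph{whole} quadratic form $\eps\|\nabla R\|^2+\eps^{-1}\int f''(u^A_\eps)R^2$ from below; once it is invoked, the dissipative term $\eps\|\nabla R\|^2$ is consumed, so ``collecting terms and integrating'' cannot yield simultaneous bounds on $\|R\|_{L^\infty(H^{-1})}$ and on $\int_0^T\eps\|\nabla R\|^2\,dt$. The paper runs two \emph{separate} estimates from the same key inequality \eqref{e:apKEY}: the spectral estimate only for the $H^{-1}$/Gronwall bound (Lemma \ref{lem:45}), and, for the gradient bound (Lemma \ref{lem:46}), the measure-theoretic estimate
\[
-\eps^{-1}\int_0^t\int_\cD f''(u^A_\eps)R^2\,dx\,ds\ \le\ \eps^{-2/p}\,\|R\|^2_{p,\cD_t},
\]
which rests on the geometric fact that the space-time set where $f''(u^A_\eps)<0$ has measure $\mathcal{O}(\eps)$ (an $\eps$-neighborhood of the interface), via H\"older against that small set. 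This estimate --- not ``a H\"older estimate in time'', as you assert --- is the source of the exponent $-1-2/p$ in the $H^1$ bound and hence of the factor $\frac{p+2}{p}=1+\frac2p$ in the hypotheses of Theorem \ref{thm:main}. Without it, the best crude bound gives $\|R\|^2_{L^2(0,T_\eps;H^1)}\lesssim\eps^{-2+2\gamma}+\dots$, and the continuation arithmetic then forces strictly larger $\gamma$ and $\sigma$ than the theorem states, so your argument does not prove the theorem as stated.

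A second, related problem is that your use of the spectral estimate is internally inconsistent. You read Proposition \ref{abc:spec} literally, with right-hand side $-C_0\|\nabla w\|^2_{L^2}$, and build your concluding diagnosis on that reading; but under that reading your own $H^{-1}$ step already fails, since the drift would then contain $+C_0\|\nabla R\|^2$ with no dissipation left and nothing against which to close a Gronwall inequality in $\|R\|^2_{H^{-1}}$. The estimate actually proved in \cite{abc} (and needed here; the Proposition as printed contains a typo) has right-hand side $-C_0\|\nabla\psi\|^2_{L^2}=-C_0\|w\|^2_{H^{-1}}$, which is precisely what turns \eqref{e:apKEY} into $d\|\nabla\psi\|^2\le C\|\nabla\psi\|^2\,dt+\dots$; so the ``$H^{-1}$ version one would want'' is available, and the small-noise restriction has a different origin (the weak $L^p$-based control of the nonlinearity and the $\mathcal{O}(1)$ unstable eigenvalue). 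Finally, a smaller point: bounding the cubic term $\tfrac3\eps\int u^A_\eps R^3$ by H\"older against an ``$L^q$-factor controlled through $H^1$'' is circular inside the $H^{-1}$ step, where no $H^1$ control is available; the clean route (Lemma 2.2 of \cite{abc}, quoted as \eqref{thm8}) is the pointwise Young-type inequality $3|u^A_\eps||R|^3\le\tfrac12 R^4+c|R|^p$ for $p\in(2,3]$, which exploits the sign of the quartic and yields $c\eps^{-1}\|R\|^p_{p,\cD}$, controlled directly by the stopping time.
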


\begin{remark}
\label{rem1}
Let us remark that in dimension $d=2$ one can easily check
that we obtain the smallest possible value both for $\sigma$ and $\gamma$ for $p=3$.
In that case $\gamma>6$ and $\sigma> 23/3$. This is in well agreement with the $\gamma$ derived in  \cite{abc},
but unfortunately we can only consider very small noise strength. But it seems that
using the $H^{-1}$-norm and spectral information available there is no improvement possible.

For dimension $d=3$ again the noise strength is small, but the result is not that clear.
While the smallest value for $\gamma$ is still attained at $p=3$ (with $\gamma>6$ and $\sigma>11$)
we obtain the smallest value of $\sigma$ for some $p<3$.
\end{remark}
\begin{remark}\db{
Let us remark on the fact that we take $R(0)=0$. We could take more general initial conditions $u_\eps(0)$ for the Cahn-Hilliard equation.
Looking closely into the proof of Theorem \ref{thm:main} we could allow for \dbz{an initial error $R(0)$ with} $\|R(0)\|^2_{H^{-1}}= \mathcal{O}(\eps^{p\gamma-1}+\eps^{\sigma+\gamma-\kappa})$.
}
\end{remark}
\begin{remark}\label{rem2}
Let us state two main problems with the approach presented.

First, the spectral estimate in Theorem \ref{abc:spec} yields an unstable
eigenvalue of order $\mathcal{O}(1)$. This immediately restricts
any approximation result to time scales of order $\mathcal{O}(1)$.
But we strongly believe that this eigenvalue represents only a motion of
the interfaces itself. One would need spectral information orthogonal
to the space of all possible approximations $u_\eps^A$, which are
parametrized by the hypersurfaces $\Gamma$. But this does not seem to be available at the moment.

Moreover, later in the closure of the estimate we can only allow
$\sigma >\sigma_0$ large enough, i.e., for sufficiently small
noise strength. Here, an additional problem is that the $H^{-1}$-norm is not
strong enough to control the nonlinearity, and from the spectral
theorem \ref{abc:spec}, we do not get any higher order norms that would help in
the estimate. Nevertheless, if we start with higher order norms like $L^2$, for instance, then
there are no spectral estimates available at all.
 \end{remark}


\section{The proof of \db{the} Main Theorem \ref{thm:main}}
\label{sec:proof}
%
\subsection{Idea of Proof}

For the proof we define for $p\in(2,3]$ and
$\sigma>\gamma>0$ (both fixed later), the stopping time
\begin{equation}
\label{e:defTeps}
 T_\eps := \inf\Big{\{}t\in[0,T]:\  \Big(\int_0^t\|R(s)\|^p_{L^p}ds \Big)^{1/p}  > \eps^\gamma
 \Big{\}},
\end{equation}
where the convention is that $T_\eps=T$ if the condition is never true.

The general strategy for the proof of the main theorem is the
following:
\begin{enumerate}
\item Use It\^o-formula for
$d\|R\|^2_{H^{-1}}$.
\item Consider all estimates up to $T_\eps$ only.
\item  Bound the stochastic integrals (at least on a set with high
probability).
\item Show that $T_\eps=T$ with high probability using the bound derived for
$\int_0^t\|R\|_{L^p}dt$ up to $T_\eps$.
\end{enumerate}


\subsection{A differential equation for the error}


Let us first derive an SPDE for $R$ from \eqref{def:R}, using
\eqref{appr} and \eqref{CH:sys}, as follows
\begin{equation}\label{e:SPDER}
 \begin{split}
d R = du_\eps- d u_\eps^A
&=  \Delta v_\eps^A dt-\Delta v_\eps dt + \eps^\sigma d{\cW}\\
&=  \Big[-\frac1{\eps} \Delta{f^{\prime}(u_\eps^A)}+ \eps \Delta^2 u_\eps^A
 +\Delta r_\eps^A + \frac1{\eps}\Delta{f^{\prime}(u_\eps)}- \eps\Delta^2 u_\eps\Big] dt + \eps^\sigma d{\cW}
\\
 &=\frac1{\eps}\Big[\Delta{f^{\prime}(u_\eps^A+R)}- \Delta{f^{\prime}(u_\eps^A)}\Big] dt +
 \Big[ - \eps \Delta^2R
 +\Delta r_\eps^A \Big] dt + \eps^\sigma d{\cW}.
\end{split}
\end{equation}

\subsection{The $H^{-1}$-norm of $R$.}

The approximate solutions $u_\eps^A$ and $v_\eps^A$ are, by their
construction, functions in $C^2(\overline{\mathcal{D}_T})$, while
$u_\eps^A$ satisfies for all $t\in[0,T]$
$$
\int_\mathcal{D} u_\eps^A(t) dx=0\;.
$$
Since (\ref{CH}) is mass conservative,  we can conclude that mass
conservation also holds for $R$, i.e.,\ for all $t\in[0,T]$
$$
\int_\mathcal{D} R(t) dx=0\;.
$$
Observe that the operator $-\Delta$ is a symmetric positive operator on the space
\[ \mathcal{H}^2:=
\Big{\{}w\in C^2(\overline{\mathcal{D}}) : \int_\mathcal{D} w\;
dx=0 \quad\text{and}\quad
\partial_n w = 0 \text{ on }
\partial\mathcal{D}  \Big{\}}.
\]
Therefore, by elliptic regularity, the operator $-\Delta
:\mathcal{H}^2 \to L^2$ is bijective. So, we  can invert it and
 for any $t\in[0,T]$ there exists a unique $\psi(t) \in
\mathcal{H}^2$ such that
\begin{equation}
\label{def:psi} -\Delta\psi(t)=R(t), \quad \text{or
equivalently }\quad (-\Delta)^{-1}  R(t)= \psi(t).
\end{equation}
With the scalar
product $\langle \cdot , \cdot \rangle$  in $L^2$
the $H^{-1}$-norm of $R$ is given by
\[
\|R\|^2_{H^{-1}} =   \|(-\Delta)^{-1/2}R\|^2_{L^2} =
\|(-\Delta)^{1/2}\psi\|^2_{L^2} =\|\nabla \psi\|^2_{L^2}  =
\langle \psi , R \rangle.
\]
Since
$$
\langle d\psi , R \rangle = \langle  -\Delta d R , R
\rangle
 =  \langle   d R , -\Delta R \rangle,
 $$
 considering the
 It\^o-differential,
 we obtain
\begin{equation}
 \label{e:dRH-1}
\begin{split}
 \tfrac12 d \|R\|^2_{H^{-1}}
& = \langle \psi , d R \rangle  +  \tfrac12 \langle d\psi , d R \rangle
 = \langle \psi , d R \rangle +  \tfrac12 \eps^{2\sigma} \langle  (-\Delta)^{-1} d\mathcal{W} , d  \mathcal{W} \rangle \\
& =  \langle \psi , d R \rangle +  \tfrac12 \eps^{2\sigma}
\text{tr}(Q^{1/2} (-\Delta)^{-1}Q^{1/2}).
\end{split}
\end{equation}
Here, by Assumption \ref{ass:W} the trace in the previous estimate is bounded.
So, using \eqref{CH:sys} and \eqref{appr}, we arrive at
\begin{equation}\label{e:d_psiR1}
\begin{split}
\langle \psi , d R \rangle
&=\langle \psi , d (u_\eps-u_\eps^A)\rangle
=\langle \psi , (-\Delta)(v_\eps-v_\eps^A)dt + \eps^\sigma d\mathcal{W}\rangle \\
&=\langle R , (v_\eps-v_\eps^A)\rangle dt
+  \eps^\sigma \langle \psi , d\mathcal{W}\rangle.
\end{split}
\end{equation}
Using again \eqref{CH:sys} and \eqref{appr} in order to replace
the $v$'s, yields  the following equality
\begin{equation}
\label{e:d_psiR2}
\begin{split}
\langle \psi , d R \rangle
&=- \eps^{-1}  \langle R ,f'(u_\eps)-f'(u^A_\eps)\rangle dt
+ \eps  \langle R , \Delta(u_\eps- u^A_\eps) \rangle dt
-   \langle R , r_\eps^A\rangle dt
+  \eps^\sigma \langle \psi , d\mathcal{W}\rangle.\\
&= - \eps^{-1}\langle R ,(f'(u_\eps)-f'(u^A_\eps))\rangle dt
- \eps  \| \nabla R \|^2 dt
-   \langle R , r_\eps^A\rangle dt
+  \eps^\sigma \langle \psi , d\mathcal{W}\rangle.\\
\end{split}
\end{equation}
\db{
\begin{definition}
For any positive integer $p$, we define the $L^p$-norms
$$\|f\|_{p,\mathcal{D}}:=\Big{(}\int_{\mathcal{D}}|f|^pdx\Big{)}^{1/p}
\qquad\text{and}\qquad
\|f\|_{p,\mathcal{D}_t}:=\Big{(}\int_0^t\int_{\mathcal{D}}|f|^pdxds\Big{)}^{1/p}.
$$
\end{definition}
}
Also, \db{we denote} by $\|\cdot\|$ the
usual $L^2(\mathcal{D})$-norm and by  $\|\cdot\|_{L^p}$
the $L^p(\mathcal{D})$-norm.

Applying Taylor's formula to expand $f'(u_\eps)$ around
$u_\eps^A$, with residua\db{l} $\mathcal{N}(u_\eps^A,R)$, we
have
\[
f'(u_\eps)-f'(u_\eps^A)=f''(u_\eps^A)R+\mathcal{N}(u_\eps^A,R)\;.
\]
The crucial bound for the nonlinearity in the residual is the
following result from Lemma 2.2 of \cite{abc}. It is based on a
direct representation of the remainder $\mathcal{N}$ in the
Taylor expansion together with the fact that $u_\eps^A$ is
uniformly bounded.
\begin{lemma}
 Let $p\in(2,3]$ and $q$ such that $\frac{1}{p}+\frac{1}{q}=1$, then it holds that
\begin{equation}
\label{thm8}
-\int\eps^{-1}\mathcal{N}(u_\eps^A,R)R\leq
c\eps^{-1}\|R\|_{p,\mathcal{D}}^p.
\end{equation}
\end{lemma}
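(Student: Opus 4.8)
The plan is to exploit that the nonlinearity $f'(u)=u^3-u$ is a cubic polynomial, so that the Taylor remainder $\mathcal{N}(u_\eps^A,R)$ in $f'(u_\eps^A+R)=f'(u_\eps^A)+f''(u_\eps^A)R+\mathcal{N}(u_\eps^A,R)$ terminates and can be written down exactly. A direct expansion of $(u_\eps^A+R)^3$ gives
\[
\mathcal{N}(u_\eps^A,R)=3u_\eps^A R^2+R^3,
\]
and hence the integrand in \eqref{thm8} is the explicit pointwise expression
\[
-\eps^{-1}\mathcal{N}(u_\eps^A,R)R=-\eps^{-1}\big(3u_\eps^A R^3+R^4\big).
\]
The task thus reduces to the purely algebraic pointwise estimate $-(3u_\eps^A R^3+R^4)\le c|R|^p$, which, once integrated over $\mathcal{D}$ and multiplied by $\eps^{-1}$, is exactly the claimed bound $c\eps^{-1}\|R\|_{p,\mathcal{D}}^p$.

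To establish the pointwise estimate I would set $M:=\|u_\eps^A\|_{L^\infty(\mathcal{D}_T)}$, which is finite and of order $1$ by the uniform bounds of Theorem \ref{abc:approx} (one has $|u_\eps^A|\le 1+C\eps$ away from $\Gamma$, while $u_\eps^A\in C^2(\overline{\mathcal{D}_T})$ is bounded near $\Gamma$). The key structural observation is that the quartic term $-R^4\le 0$ is favorable and dominates the cubic cross term whenever $|R|$ is large: on the set $\{|R|>3M\}$ one has $|R|^4>3M|R|^3\ge -3u_\eps^A R^3$, so there
\[
-(3u_\eps^A R^3+R^4)\le 3M|R|^3-|R|^4=|R|^3(3M-|R|)<0\le c|R|^p.
\]
On the complementary region $\{|R|\le 3M\}$ I would simply discard the favorable term, $-R^4\le 0$, and use the uniform bound together with $3-p\ge 0$ (recall $p\in(2,3]$) to write
\[
-(3u_\eps^A R^3+R^4)\le 3|u_\eps^A|\,|R|^3\le 3M|R|^{3-p}|R|^p\le 3M(3M)^{3-p}|R|^p,
\]
since $|R|^{3-p}\le (3M)^{3-p}$ on this set. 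Taking $c:=3M(3M)^{3-p}$, which depends only on $p$ and the uniform bound on $u_\eps^A$, gives the pointwise inequality on all of $\mathcal{D}$.

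Integrating over $\mathcal{D}$ (or over $\mathcal{D}_t$ for the space–time version of the norm) and multiplying by $\eps^{-1}$ then finishes the proof. The only point requiring care — and the reason the final estimate is phrased in the $L^p$-norm rather than a lower power — is the conversion of the cubic $|R|^3$ into $|R|^p$ with $p<3$: this is impossible globally without an $L^\infty$-control on $R$ itself, which is unavailable here, but it becomes possible precisely because the uniform boundedness of $u_\eps^A$ confines the region where the cubic cross term can be positive to $\{|R|\lesssim M\}$, while the good sign of the quartic absorbs all large residuals for free. It is exactly this $L^p$-structure that matches the norm appearing in the stopping time $T_\eps$ and makes \eqref{thm8} directly usable in \eqref{e:d_psiR2}; the conjugate exponent $q$ does not enter this particular estimate and is recorded only for the Hölder splittings used elsewhere in the proof.
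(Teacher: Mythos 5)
Your proof is correct and follows essentially the same route as the paper, which cites Lemma 2.2 of \cite{abc} and describes its proof precisely as ``a direct representation of the remainder $\mathcal{N}$ in the Taylor expansion together with the fact that $u_\eps^A$ is uniformly bounded'' --- exactly your exact identity $\mathcal{N}(u_\eps^A,R)=3u_\eps^A R^2+R^3$ combined with the $L^\infty$-bound on $u_\eps^A$ from Theorem \ref{abc:approx}. Your case split between $\{|R|>3M\}$ (where the good sign of $-R^4$ dominates) and $\{|R|\le 3M\}$ (where $|R|^{3-p}$ is bounded, using $p\le 3$) correctly supplies the details, including the observation that $q$ plays no role in this particular estimate.
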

Thus, we obtain
\begin{equation}
\label{e:boundNL}
\begin{split}
 -\frac1\eps\langle R ,(f'(u_\eps)-f'(u^A_\eps))\rangle
&= -\frac1\eps\langle R , f''(u_\eps^A)R \rangle
-  \frac1\eps\langle R , \mathcal{N}(u_\eps^A,R) \rangle \\
&\leq  -\frac1\eps\langle R , f''(u_\eps^A)R \rangle +
c\eps^{-1}\|R\|_{p,\mathcal{D}}^p\;.
\end{split}
\end{equation}
Relations (\ref{e:dRH-1}), (\ref{e:d_psiR2}) and (\ref{e:boundNL})
yield the following first key estimate
\begin{equation}
\label{e:apKEY}
\begin{split}
\tfrac12 d \|\nabla \psi \|^2
& +  \eps  \| \nabla R \|^2 dt
+ \frac1\eps\langle R , f''(u_\eps^A)R \rangle dt
\\& \leq
  c\eps^{-1} \|R\|_{p,\mathcal{D}}^p dt
+  \| R \|_{p,\cD} \|r_\eps^A\|_{q,\cD} dt +  \eps^\sigma \langle
\psi , d\mathcal{W}\rangle +   C_\text{tr} \eps^{2\sigma} dt.
\end{split}
\end{equation}
From this \textit{a-priori} estimate, we now can derive a uniform
bound for $\|\nabla \psi\|$ and later a mean square bound on $\| \nabla R \|$.
Both estimates  still involve the
$L^p$-norm of $R$ on the right hand side, and we use the stopping time $T_\eps$ to control this.
%
%
%
\subsection{Technical Lemmas}
%
%
%
We first need the following Lemma of Burkholder-Davis-Gundy type for stochastic integrals.
Recall the stopping time $T_\eps$
from (\ref{e:defTeps}).
\begin{lemma}
\label{lem:boundSI}
 Let $f$ be a continuous real valued function, and $\Delta\psi=R$ as before.
Then for all $\kappa>0$, $\ell>1$ there exists a constant
$C=C(\ell,T,\kappa)$ such that
\[
\mathbb{P}\Big( \sup_{[0,T_\eps]}|\int_0^tf\langle \psi , d\cW
\rangle| \geq \eps^{\gamma-\kappa} \Big ) \leq
C\eps^{\ell\kappa}\|f\|^\ell_{L^{2p/(p-2)}}.
\]
\end{lemma}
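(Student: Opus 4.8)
The plan is to recognise $M_t:=\int_0^t f\langle\psi,d\cW\rangle$ as a continuous martingale on $[0,T_\eps]$ and to bound its quadratic variation pathwise, exploiting that up to $T_\eps$ the $L^p$-norm of $R$ is controlled by the very definition \eqref{e:defTeps} of the stopping time. Writing $\cW(t)=\sum_k\alpha_k\beta_k(t)e_k$ as in Assumption \ref{ass:W}, so that $\langle\psi,d\cW\rangle=\sum_k\alpha_k\langle\psi,e_k\rangle\,d\beta_k$, the It\^o isometry gives
\[
\langle M\rangle_t=\int_0^t f(s)^2\,\|Q^{1/2}\psi(s)\|^2\,ds
=\int_0^t f(s)^2\sum_k\alpha_k^2\langle\psi(s),e_k\rangle^2\,ds.
\]
The first step is the spatial estimate $\|Q^{1/2}\psi\|^2\le\mathrm{tr}(\Delta^{-1}Q)\,\|\nabla\psi\|^2$. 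Expanding $\psi=\sum_k c_k e_k$ in the Fourier basis with $-\Delta e_k=\mu_k e_k$, one writes $\alpha_k^2c_k^2=(\alpha_k^2\mu_k^{-1})(\mu_k c_k^2)$ and uses $\mu_k c_k^2\le\sum_j\mu_j c_j^2=\|\nabla\psi\|^2$ for each $k$, so that $\sum_k\alpha_k^2 c_k^2\le\|\nabla\psi\|^2\sum_k\alpha_k^2\mu_k^{-1}=\mathrm{tr}(\Delta^{-1}Q)\,\|\nabla\psi\|^2$; the last factor is finite precisely by the weak smoothness hypothesis \eqref{e:trace}. Since $\|\nabla\psi\|=\|R\|_{H^{-1}}$ by \eqref{def:psi}, this yields $\|Q^{1/2}\psi\|^2\le C\|R\|_{H^{-1}}^2$.

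The second step converts the $H^{-1}$-norm to the $L^p$-norm that the stopping time controls. Since $p>2\ge 2d/(d+2)$ for $d\in\{2,3\}$, the embedding $H^1(\cD)\hookrightarrow L^{2d/(d-2)}(\cD)$ dualises to $L^p(\cD)\hookrightarrow H^{-1}(\cD)$, giving $\|R\|_{H^{-1}}\le C\|R\|_{L^p}$. Inserting both bounds into the quadratic variation and applying H\"older in time with conjugate exponents $p/2$ and $p/(p-2)$ gives, for $t\le T_\eps$,
\[
\langle M\rangle_{t}\le C\int_0^{t}f(s)^2\|R(s)\|_{L^p}^2\,ds
\le C\Big(\int_0^{t}\|R\|_{L^p}^p\,ds\Big)^{2/p}\,\|f\|_{L^{2p/(p-2)}}^2 .
\]
By the definition of $T_\eps$ the bracket is at most $\eps^{2\gamma}$, so $\langle M\rangle_{T_\eps}\le C\eps^{2\gamma}\|f\|_{L^{2p/(p-2)}}^2$ almost surely.

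With this deterministic (a.s.) bound on the quadratic variation in hand, the conclusion follows from the Burkholder--Davis--Gundy inequality and Markov's inequality: for any $\ell>1$,
\[
\mathbb{E}\Big[\sup_{[0,T_\eps]}|M_t|^\ell\Big]\le C_\ell\,\mathbb{E}\big[\langle M\rangle_{T_\eps}^{\ell/2}\big]
\le C_\ell\,\eps^{\ell\gamma}\|f\|_{L^{2p/(p-2)}}^\ell ,
\]
and therefore
\[
\mathbb{P}\Big(\sup_{[0,T_\eps]}|M_t|\ge\eps^{\gamma-\kappa}\Big)
\le\eps^{-(\gamma-\kappa)\ell}\,\mathbb{E}\Big[\sup_{[0,T_\eps]}|M_t|^\ell\Big]
\le C_\ell\,\eps^{\ell\kappa}\|f\|_{L^{2p/(p-2)}}^\ell ,
\]
which is the claim. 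I expect the main technical obstacle to be the first step: extracting a clean multiple of $\|R\|_{H^{-1}}^2$ from $\sum_k\alpha_k^2\langle\psi,e_k\rangle^2$ hinges on exactly the minimal smoothness $\mathrm{tr}(\Delta^{-1}Q)<\infty$, and one must verify that this is just enough to keep the stochastic forcing controllable at the level of the $H^{-1}$-norm used throughout. A secondary point requiring care is the justification of BDG \emph{up to the random time} $T_\eps$ together with the fact that the quadratic-variation bound holds pathwise on $[0,T_\eps]$ by virtue of the stopping rule.
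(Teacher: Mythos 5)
Your proof has the same architecture as the paper's: bound the moments of the stopped stochastic integral by the Burkholder--Davis--Gundy inequality, estimate the quadratic variation spatially via the trace assumption, apply H\"older in time with the exponent pair $p/2$ and $p/(p-2)$, invoke the definition \eqref{e:defTeps} of $T_\eps$ to extract the factor $\eps^{\ell\gamma}$, and conclude with Chebyshev's inequality. All of these steps, including your remark that BDG must be applied to the stopped martingale $M_{t\wedge T_\eps}$, are correct and coincide in substance with the paper's argument, which runs through exactly the chain $\langle M\rangle_{T_\eps}\le C\big(\int_0^{T_\eps}\|R\|_{L^p}^p\,ds\big)^{2/p}\|f\|^2_{L^{2p/(p-2)}}\le C\eps^{2\gamma}\|f\|^2_{L^{2p/(p-2)}}$ followed by Chebyshev.

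The one step whose justification does not hold under the stated hypotheses is your spatial estimate. You expand $\psi=\sum_k c_k e_k$ ``in the Fourier basis with $-\Delta e_k=\mu_k e_k$'', but Assumption \ref{ass:W} only asserts that $(e_k)$ is an orthonormal basis of $L^2(\mathcal{D})$ diagonalizing $Q$; nothing guarantees that $Q$ commutes with the Neumann Laplacian, so this basis need not diagonalize $-\Delta$, and the manipulation $\alpha_k^2c_k^2=(\alpha_k^2\mu_k^{-1})(\mu_k c_k^2)$ has no meaning in general. The inequality you want is nevertheless true without any commutativity: setting $A:=Q^{1/2}(-\Delta)^{-1/2}$ on the mean-zero subspace, one has $\|Q^{1/2}\psi\|^2=\|A(-\Delta)^{1/2}\psi\|^2\le\|A\|^2\|\nabla\psi\|^2$, and by the $C^*$-identity $\|A\|^2=\|AA^*\|=\|Q^{1/2}(-\Delta)^{-1}Q^{1/2}\|\le \mathrm{tr}\big(Q^{1/2}(-\Delta)^{-1}Q^{1/2}\big)<\infty$, which is the rigorous reading of \eqref{e:trace}. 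The paper takes an equivalent shortcut that avoids the issue entirely: since $\Delta^{-1}Q\Delta^{-1}$ is bounded by assumption, $\langle\psi,Q\psi\rangle=\langle R,\Delta^{-1}Q\Delta^{-1}R\rangle\le C\|R\|^2_{L^2}\le C\|R\|^2_{L^p}$, using \eqref{def:psi}; this also makes your detour through the duality $L^p\hookrightarrow H^{-1}$ unnecessary, since on a bounded domain $\|R\|_{L^2}\le C\|R\|_{L^p}$ for $p>2$ already feeds directly into the H\"older step. With that one repair, your proof is complete and is essentially the paper's.
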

\begin{proof}
We shall use the Chebychev's inequality. Thus, we need to bound
the moments first. Applying Burkholder-Davis-Gundy inequality
(using that $\Delta^{-1}Q\Delta^{-1}$ is a bounded operator by
assumption), we obtain
\begin{equation*}
\begin{split}
  \mathbb{E}\sup_{[0,T_\eps]}\Big|\int_0^tf(s)\langle \psi(s) , d\cW(s) \rangle\Big|^\ell
\leq& C_\ell \mathbb{E} \Big|\int_0^{T_\eps} f^2(s)\langle \psi(s) , Q \psi(s) \rangle ds\Big|^{\ell/2} \\
\leq& C \mathbb{E} \Big|\int_0^{T_\eps} f^2(s) \|R(s)\|_{L^2}^2 ds \Big|^{\ell/2} \\
\leq& C   \mathbb{E} \Big|
\Big( \int_0^{T_\eps} \|R(s)\|_{L^p}^p ds \Big)^{2/p} \Big|^{\ell/2} \Big|\int_0^{T_\eps} f^{2p/(p-2)}\Big|^{(p-2)\ell / (2p)}\\
\leq& C \|f\|^\ell_{L^{2p/(p-2)}}    \eps^{\gamma \ell}.
\end{split}
\end{equation*}
Here, we  applied H\"older's inequality and the definition of
$T_\eps\leq T$.

Furthermore, using Chebychev's inequality, we obtain the result
as follows
\[\mathbb{P}\Big( \sup_{[0,T_\eps]}|\int_0^tf\langle \psi , d\cW \rangle| \geq \eps^{\gamma-\kappa} \Big )
 \leq \eps^{-\ell(\gamma-\kappa)} \mathbb{E}\sup_{[0,T_\eps]}\Big|\int_0^tf\langle \psi , d\cW \rangle\Big|^\ell
\leq C \eps^{\ell\kappa}\|f\|^\ell_{L^{2p/(p-2)}}.
\]
\end{proof}
Now we present the following stochastic version of Gronwall's Lemma.
\begin{lemma}
\label{lem:stochGW} Let $X$, $\mathcal{F}_i$, $\lambda$ be real
valued processes, and $\mathcal{G}$ be a Hilbert-space valued
one. Furthermore, assume that
\[
dX:=[\lambda X+\mathcal{F}_1]dt + \langle \mathcal{G},d
\mathcal{W}\rangle,
\]
and that
$$
\mathcal{F}_1 \leq \mathcal{F}_2.
$$
Then the following
inequality holds true
\[
X(t) \leq \dbz{e^{\Lambda(t)}X(0)} + \int_0^t
e^{\Lambda(t)-\Lambda(s)}{\da{\mathcal{F}_2}}(s)ds +  \int_0^t
e^{\Lambda(t)-\Lambda(s)}\langle \mathcal{G}(s),d
\mathcal{W}(s)\rangle,
\]
for $$\Lambda(t):=\int_0^t \lambda(s)ds.$$
\end{lemma}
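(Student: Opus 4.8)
The plan is to reduce the statement to the classical variation-of-constants formula by means of an integrating factor. Set
\[
Y(t) := e^{-\Lambda(t)}X(t),
\]
and observe that $\Lambda(t)=\int_0^t\lambda(s)\,ds$ is absolutely continuous in $t$, with $d\,e^{-\Lambda(t)} = -\lambda(t)e^{-\Lambda(t)}\,dt$; in particular $e^{-\Lambda}$ is a process of finite variation carrying no martingale part. Consequently, when I apply the It\^o product rule to $Y=e^{-\Lambda}X$ there is no cross-variation contribution, and I obtain
\[
dY = -\lambda e^{-\Lambda}X\,dt + e^{-\Lambda}\,dX
   = -\lambda e^{-\Lambda}X\,dt + e^{-\Lambda}\big[\lambda X + \mathcal{F}_1\big]\,dt + e^{-\Lambda}\langle \mathcal{G},d\mathcal{W}\rangle.
\]
The two $\lambda X$ terms cancel exactly, leaving
\[
dY = e^{-\Lambda}\mathcal{F}_1\,dt + e^{-\Lambda}\langle \mathcal{G},d\mathcal{W}\rangle.
\]

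Next I would integrate this identity from $0$ to $t$. Since $\Lambda(0)=0$ gives $Y(0)=X(0)$, this yields
\[
e^{-\Lambda(t)}X(t) = X(0) + \int_0^t e^{-\Lambda(s)}\mathcal{F}_1(s)\,ds + \int_0^t e^{-\Lambda(s)}\langle \mathcal{G}(s),d\mathcal{W}(s)\rangle.
\]
Multiplying through by $e^{\Lambda(t)}$ produces the kernel $e^{\Lambda(t)-\Lambda(s)}$. Here the factor $e^{\Lambda(t)}$ is constant with respect to the integration variable $s$, so it may be kept outside the It\^o integral; the well-defined object is $e^{\Lambda(t)}\int_0^t e^{-\Lambda(s)}\langle \mathcal{G}(s),d\mathcal{W}(s)\rangle$, whose integrand $e^{-\Lambda(s)}\mathcal{G}(s)$ is adapted, and the notation $e^{\Lambda(t)-\Lambda(s)}$ under the stochastic integral is understood in this sense. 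This gives the variation-of-constants representation
\[
X(t) = e^{\Lambda(t)}X(0) + \int_0^t e^{\Lambda(t)-\Lambda(s)}\mathcal{F}_1(s)\,ds + \int_0^t e^{\Lambda(t)-\Lambda(s)}\langle \mathcal{G}(s),d\mathcal{W}(s)\rangle.
\]

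Finally, since the kernel $e^{\Lambda(t)-\Lambda(s)}$ is strictly positive, the hypothesis $\mathcal{F}_1\leq\mathcal{F}_2$ lets me bound the ordinary (pathwise) drift integral from above, replacing $\mathcal{F}_1$ by $\mathcal{F}_2$ and turning the equality into the claimed inequality while leaving the martingale term untouched. I do not expect a genuine obstacle: the only points requiring care are the two just flagged, namely that $e^{-\Lambda}$ has finite variation so the product rule produces no It\^o correction, and that $e^{\Lambda(t)}$ must be kept outside the stochastic integral in order for the integrand to remain adapted. The comparison step itself is purely pointwise in the drift and is therefore immediate.
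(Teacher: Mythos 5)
Your proof is correct and takes essentially the same route as the paper: the integrating factor $Y=e^{-\Lambda}X$, cancellation of the $\lambda X$ terms, integration to a variation-of-constants identity, the pointwise comparison $\mathcal{F}_1\leq\mathcal{F}_2$, and multiplication by $e^{\Lambda(t)}$. Your two flagged points (no It\^o correction since $e^{-\Lambda}$ has finite variation, and keeping $e^{\Lambda(t)}$ outside the stochastic integral so the integrand stays adapted) simply make explicit what the paper's shorter proof leaves implicit.
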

\begin{proof}
We define $$Y(t):=X(t)e^{-\Lambda(t)}.$$ By the definition of
the process $Y$, we obtain easily
\[
dY= e^{-\Lambda}dX -\lambda Y dt = e^{-\Lambda}\mathcal{F}_1dt +
e^{-\Lambda}\langle \mathcal{G},d \mathcal{W}\rangle,
\]
and
\[
\begin{split}
Y(t)&= Y(0) + \int_0^t e^{-\Lambda(s)}\mathcal{F}_1(s) ds
+  \int_0^t e^{-\Lambda(s)}\langle \mathcal{G}(s),d \mathcal{W}(s)\rangle\\
&\leq  X(0) + \int_0^t e^{-\Lambda(s)}\mathcal{F}_2(s) ds
+  \int_0^t e^{-\Lambda(s)}\langle \mathcal{G}(s),d \mathcal{W}(s)\rangle.\\
\end{split}
\]
Multiplying the \db{inequality} with $e^{\Lambda(t)}$, and using the
definition of $Y$, we derive the stated stochastic version of
Gronwall's inequality.
\end{proof}
%
%
%
%
\subsection{ Uniform Bound on $\nabla \psi$ }
%
%
%
%
Using the spectral estimate of Proposition \ref{abc:spec}, we get from (\ref{e:apKEY})
\begin{equation}
\label{e:ap2}
d \|\nabla \psi \|^2
 \leq
 \Big[ C \|\nabla \psi \|^2
+  c\eps^{-1} \|R\|_{p,\mathcal{D}}^p +  2\| R \|_{p,\cD}
\|r_\eps^A\|_{q,\cD} +   C \eps^{2\sigma} \Big] dt + 2
\eps^\sigma \langle \psi , d\mathcal{W}\rangle.
\end{equation}
\db{ We apply Lemma \ref{lem:stochGW}.  Since $R(0)=0$ implies
$\nabla \psi(0)=0$, this yields}
\begin{equation*}
\begin{split}
\|\nabla \psi(t) \|^2
 &\leq
\int_0^t e^{C(t-s)} \Big[
c\eps^{-1} \|R\|_{p,\mathcal{D}}^p
+  \| R \|_{p,\cD} \|r_\eps^A\|_{q,\cD}
+   C_\text{tr} \eps^{2\sigma}
\Big] ds
+  \int_0^t e^{C(t-s)} \eps^\sigma \langle \psi , d\mathcal{W}(s)\rangle \\
&\leq e^{CT} \int_0^t \Big[ c\eps^{-1} \|R\|_{p,\mathcal{D}}^p +
\| R \|_{p,\cD} \|r_\eps^A\|_{q,\cD} +   C_\text{tr}
\eps^{2\sigma} \Big] ds +  \eps^\sigma e^{CT} \Big|\int_0^t e^{-
Cs}  \langle \psi , d\mathcal{W}(s)\rangle \Big|.
\end{split}
\end{equation*}
Furthermore, from Lemma \ref{lem:boundSI} we obtain on a subset
with high probability
\[
 \sup_{t\in[0,T_\eps]} \Big|\int_0^t  e^{- Cs} \langle \psi(s) , d\cW(s) \rangle\Big|
\leq C\eps^{\gamma-\kappa}.
\]
Thus\da{,} we arrive at
\begin{equation}
\begin{split}
\|\nabla \psi(t) \|^2
 &\leq
C \eps^{-1} \|R\|_{p,\mathcal{D}_t}^p
+ C \| R \|_{p,\cD_t} \|r_\eps^A\|_{q,\cD_t}
+   C \eps^{2\sigma} t
+ C \eps^{\sigma+\gamma-\kappa}\\
 &\leq  C[ \eps^{p\gamma-1}
+  \eps^\gamma \|r_\eps^A\|_{q,\cD_T}
+    \eps^{2\sigma}
+  \eps^{\sigma+\gamma-\kappa} ] \\
 &\leq  C[ \eps^{p\gamma-1}
+  \eps^{\sigma+\gamma-\kappa} ],
\end{split}
\end{equation}
where we used that $\gamma<\sigma$ and that $\kappa$ is
sufficiently small, together with Theorem \ref{abc:approx}.
\db{Finally,} we verified the following Lemma:
\begin{lemma}
\label{lem:45}
 For all $p\in[2,3)$, $\sigma>1$, $\kappa>0$, and $\gamma<\sigma$ we have
 \begin{equation}
 \label{e:b11}
  \|R(t) \|^2_{L^\infty (0,T_\eps,H^{-1})}
  \leq  C[ \eps^{p\gamma-1}
+  \eps^{\sigma+\gamma-\kappa} ]\da{,}
 \end{equation}
with probability larger that $1-C_\ell \eps^\ell$ for all $\ell>0$.
\end{lemma}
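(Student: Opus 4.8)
The plan is to turn the first key estimate \eqref{e:apKEY} into a closed Gronwall-type stochastic inequality for the quantity $\|\nabla\psi\|^2=\|R\|^2_{H^{-1}}$, and then to integrate it only up to the stopping time $T_\eps$. The first step is to dispose of the linearized quadratic form $\eps\|\nabla R\|^2+\tfrac1\eps\langle R,f''(u_\eps^A)R\rangle$ standing on the left of \eqref{e:apKEY}. Since $R$ has zero mass and obeys the Neumann condition, the spectral estimate of Proposition \ref{abc:spec} applies and controls this form from below; moving it to the right produces the Gronwall factor and leaves exactly the inequality \eqref{e:ap2},
\[
d\|\nabla\psi\|^2\leq\Big[C\|\nabla\psi\|^2+c\eps^{-1}\|R\|_{p,\cD}^p+2\|R\|_{p,\cD}\|r_\eps^A\|_{q,\cD}+C\eps^{2\sigma}\Big]dt+2\eps^\sigma\langle\psi,d\cW\rangle.
\]

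Next I would apply the stochastic Gronwall Lemma \ref{lem:stochGW} with $X=\|\nabla\psi\|^2$, constant rate $\lambda\equiv C$, forcing $\mathcal F_2=c\eps^{-1}\|R\|_{p,\cD}^p+2\|R\|_{p,\cD}\|r_\eps^A\|_{q,\cD}+C\eps^{2\sigma}$ and diffusion $\mathcal G=2\eps^\sigma\psi$. The initial condition $R(0)=0$ forces $\psi(0)=0$, so the boundary term vanishes, and on $[0,T]$ the weights $e^{C(t-s)}$ are bounded by the harmless constant $e^{CT}$. This reduces the estimate to controlling, on $[0,T_\eps]$, the time integral of $\mathcal F_2$ together with the stochastic integral $\eps^\sigma\int_0^t e^{-Cs}\langle\psi,d\cW\rangle$.

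The time integral is where the stopping time does its work. By the very definition of $T_\eps$ in \eqref{e:defTeps} one has $\int_0^{T_\eps}\|R(s)\|_{p,\cD}^p\,ds\leq\eps^{p\gamma}$, whence $\eps^{-1}\int_0^{T_\eps}\|R(s)\|_{p,\cD}^p\,ds\leq C\eps^{p\gamma-1}$; the cross term is bounded, by H\"older in time, by $\eps^\gamma\|r_\eps^A\|_{q,\cD_T}$, and the last term is of order $\eps^{2\sigma}$. Theorem \ref{abc:approx} makes $\|r_\eps^A\|_{q,\cD_T}$ an arbitrarily high power of $\eps$, so $\eps^\gamma\|r_\eps^A\|_{q,\cD_T}$ and, since $\sigma>\gamma$, also $\eps^{2\sigma}$ are dominated by $\eps^{p\gamma-1}+\eps^{\sigma+\gamma-\kappa}$ for small $\kappa$. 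For the stochastic integral I would invoke Lemma \ref{lem:boundSI} with the bounded function $f\equiv e^{-Cs}$ (whose $L^{2p/(p-2)}$-norm is a constant): it gives $\sup_{[0,T_\eps]}|\int_0^t e^{-Cs}\langle\psi,d\cW\rangle|\leq C\eps^{\gamma-\kappa}$ off an event of probability at most $C\eps^{\ell\kappa}$, so after multiplication by $\eps^\sigma$ this contribution is $\leq C\eps^{\sigma+\gamma-\kappa}$. Choosing $\ell$ large and relabelling turns the exceptional probability into $C_\ell\eps^\ell$, and collecting the four contributions yields the stated bound \eqref{e:b11} on $[0,T_\eps]$.

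The step I expect to be the genuine obstacle is the handling of the superlinear term $\eps^{-1}\|R\|_{p,\cD}^p$: the $H^{-1}$-energy balance cannot control it on its own --- the prefactor $\eps^{-1}$ is dangerous --- so one is forced to freeze it through $T_\eps$ and defer to a separate closure argument the proof that $T_\eps=T$ with high probability. A second, structural limitation is the weakness of Proposition \ref{abc:spec}: its lower bound is only of size $\mathcal O(1)$ (reflecting the unstable eigenvalue attached to the motion of $\Gamma$, cf.\ Remark \ref{rem2}), which is precisely what generates the factor $e^{CT}$ and confines the whole scheme to time scales of order $\mathcal O(1)$. Everything else is bookkeeping of powers of $\eps$.
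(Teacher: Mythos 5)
Your proposal is correct and follows essentially the same route as the paper's own proof: the spectral estimate of Proposition \ref{abc:spec} produces the Gronwall factor in \eqref{e:ap2}, the stochastic Gronwall Lemma \ref{lem:stochGW} is applied with $X=\|\nabla\psi\|^2$ and zero initial data, Lemma \ref{lem:boundSI} with $f(s)=e^{-Cs}$ controls the stochastic integral on $[0,T_\eps]$, and the definition of the stopping time together with Theorem \ref{abc:approx} handles the terms $\eps^{-1}\|R\|^p_{p,\cD_t}$, $\|R\|_{p,\cD_t}\|r^A_\eps\|_{q,\cD_t}$ and $\eps^{2\sigma}$, exactly as in the paper. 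Your closing remarks on the role of the stopping time and the $\mathcal{O}(1)$ spectral constant also match the paper's own discussion in Remark \ref{rem2}.
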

%
%
%
\subsection{Mean Square Bound on $\nabla R$}
%
%
%
We return to relation (\ref{e:apKEY}) and shall use the estimate
\db{
\[
-\eps^{-1}\int_0^t\int_\mathcal{D} f^\prime(u_\eps^A)R^2 dx \leq
\eps^{-2/p}\|R\|_{p,\mathcal{D}_t}^2 \da{,}
\]
presented \dbz{in \cite{abc} on p.\ 171}. Its proof is based on H\"older inequality together with
the fact that the}
set where the value of $u_\eps^A$ is not close to either $+1$ or
$-1$, has a small measure. More precisely, the
measure is controlled  by
$$
\text{measure} \Big{\{}(x,t)\in\mathcal{D}_T:f''(u_\eps^A)<0\Big{\}}\leq
C\eps,\;\;\;\;\eps\in(0,1].
$$
Therefore, integrating (\ref{e:apKEY}) \db{and using $\nabla \psi(0)=0$ (since $R(0)=0$),} we arrive at
\[
\eps  \|\nabla R\|^2_{2,\cD_t} \leq \eps^{-2/p} \|R\|_{p,\cD_t}^2,
+  c\eps^{-1} \|R\|_{p,\cD_t}^p +  \| R \|_{p,\cD_t}
\|r_\eps^A\|_{q,\cD_t} +  \eps^\sigma \int_0^t\langle \psi ,
d\mathcal{W}\rangle +   C_\text{tr} \eps^{2\sigma} t.
\]
Revoking again Lemma \ref{lem:boundSI}, we obtain on a set of
high probability
\[
\eps  \|\nabla R\|^2_{2,\cD_{T_\eps}} \leq \eps^{-2/p}
\|R\|_{p,\cD_{T_\eps}}^2 +  c\eps^{-1} \|R\|_{p,\cD_{T_\eps}}^p
+  \| R \|_{p,\cD_{T_\eps}}  \|r_\eps^A\|_{q,\cD_T} +
\eps^{\sigma+\gamma-\kappa} +   C_\text{tr} \eps^{2\sigma} T,
\]
where we used that $T_\eps\leq T$. Moreover, the
definition of $T_\eps$ implies for all $t\in[0,T_\eps]$
\[
\eps  \|\nabla R\|^2_{2,\cD_{T_\eps}} \leq \eps^{-2/p}
\eps^{2\gamma} +  c\eps^{-1} \eps^{p\gamma} +  \eps^\gamma
\|r_\eps^A\|_{q,\cD_T} +   \eps^{\sigma+\gamma-\kappa} +
C\eps^{2\sigma}.
\]
Here, the constant  depends on the final time $T$.
Using again  $\gamma<\sigma$ and $\kappa$ sufficiently small,
together with Theorem \ref{abc:approx}, we obtain
\begin{equation}
\eps  \|\nabla R\|^2_{2,\cD_{T_\eps}} \leq C[\eps^{-2/p}
\eps^{2\gamma} +  \eps^{-1} \eps^{p\gamma} +
\eps^{\sigma+\gamma-\kappa}].
\end{equation}
Note that as $p>2$, a short calculation shows that
\[
\eps^{-2/p} \eps^{2\gamma} > \eps^{-1} \eps^{p\gamma}
\quad\iff\quad \frac1p <\gamma,
\]
which we assume from now on, as we expect both $\gamma$ and $\sigma$ to be bigger that $1$.
We verified the following Lemma:
\begin{lemma}
\label{lem:46}
 For all $p\in[2,3)$, $\kappa>0$, $\sigma>1$,  and \dbz{$\gamma \in (\frac1p,\sigma)$} we have
 \begin{equation}
\label{e:b22}
  \|R(t) \|^2_{L^2 (0,T_\eps,H^{1})}
  \leq  C[ \eps^{-1-2/p + 2\gamma} +
\eps^{-1+\sigma+\gamma-\kappa} ]
 \end{equation}
with probability larger that $1-C_\ell \eps^\ell$ for all $\ell>0$.
\end{lemma}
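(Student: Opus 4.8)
The plan is to return to the first a-priori estimate \eqref{e:apKEY}, integrate it over $[0,t]$, and read off a bound on the target quantity $\eps\|\nabla R\|^2_{2,\cD_t}$, which is precisely $\|R\|^2_{L^2(0,t,H^1)}$ up to the mass-conservation of $R$. Because $R(0)=0$ forces $\nabla\psi(0)=0$, the integrated differential term vanishes and we are left with $\eps\|\nabla R\|^2_{2,\cD_t}+\tfrac1\eps\int_0^t\langle R,f''(u_\eps^A)R\rangle\,ds$ on the left, to be balanced against the $L^p$-term $\eps^{-1}\|R\|^p_{p,\cD_t}$, the residual term $\|R\|_{p,\cD_t}\|r_\eps^A\|_{q,\cD_t}$, the trace term $C_{\mathrm{tr}}\eps^{2\sigma}t$, and the stochastic integral $\eps^\sigma\int_0^t\langle\psi,d\cW\rangle$ on the right.

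The decisive step, and the one that differs from the $H^{-1}$-bound of Lemma \ref{lem:45}, is the treatment of the sign-indefinite term $\tfrac1\eps\langle R,f''(u_\eps^A)R\rangle$. Here I want to \emph{retain} $\eps\|\nabla R\|^2$ on the left as the quantity being estimated, so---unlike in the $H^{-1}$ case, where Proposition \ref{abc:spec} is invoked to discard it---I move only the $f''$-term to the right and bound it directly. On the set $\{f''(u_\eps^A)\ge0\}$ the sign is favorable; on the complementary set $\{f''(u_\eps^A)<0\}$, which by Theorem \ref{abc:approx} has measure of order $\eps$, a H\"older estimate with exponents $\tfrac p2$ and $\tfrac p{p-2}$ converts the small measure into a gain, giving $-\tfrac1\eps\int_0^t\!\int_\cD f''(u_\eps^A)R^2\,dx\,ds\le\eps^{-2/p}\|R\|^2_{p,\cD_t}$, the estimate quoted from \cite{abc}. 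Inserting it, the left-hand side reduces to the single term $\eps\|\nabla R\|^2_{2,\cD_t}$.

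It then remains to dispose of the right-hand side. For the stochastic integral I apply Lemma \ref{lem:boundSI} with $f\equiv1$, obtaining $\sup_{[0,T_\eps]}|\int_0^t\langle\psi,d\cW\rangle|\le C\eps^{\gamma-\kappa}$ on an event of probability at least $1-C_\ell\eps^\ell$, so this term contributes $\eps^{\sigma+\gamma-\kappa}$ and supplies the high-probability statement of the lemma. Evaluating at $t=T_\eps$, the definition \eqref{e:defTeps} of the stopping time gives $\|R\|_{p,\cD_{T_\eps}}\le\eps^\gamma$, so $\eps^{-2/p}\|R\|^2_{p}$ becomes $\eps^{-2/p+2\gamma}$ and $\eps^{-1}\|R\|^p_{p}$ becomes $\eps^{-1+p\gamma}$; Theorem \ref{abc:approx} renders the residual contribution $\eps^\gamma\|r_\eps^A\|_{q,\cD_T}$ negligible, and $\eps^{2\sigma}$ is dominated by $\eps^{\sigma+\gamma-\kappa}$ because $\gamma<\sigma$.

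The only genuinely necessary check---and the main, if modest, obstacle---is comparing the two surviving algebraic powers $\eps^{-2/p+2\gamma}$ and $\eps^{-1+p\gamma}$. A one-line computation shows that $\eps^{-2/p+2\gamma}$ dominates $\eps^{-1+p\gamma}$ exactly when $\gamma>\tfrac1p$, which is built into the hypothesis $\gamma\in(\tfrac1p,\sigma)$; this is the whole reason for imposing that lower bound on $\gamma$. Collecting the surviving terms and dividing by $\eps$ then yields $\|R\|^2_{L^2(0,T_\eps,H^1)}\le C[\eps^{-1-2/p+2\gamma}+\eps^{-1+\sigma+\gamma-\kappa}]$ on the same high-probability event, which is the claim.
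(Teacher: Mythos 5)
Your proof is correct and follows essentially the same route as the paper's: integrate \eqref{e:apKEY} using $\nabla\psi(0)=0$, bound the indefinite term $\tfrac1\eps\langle R,f''(u_\eps^A)R\rangle$ by the small-measure/H\"older estimate $\eps^{-2/p}\|R\|^2_{p,\cD_t}$ from \cite{abc} (rather than the spectral estimate, exactly as the paper does here), control the stochastic integral via Lemma \ref{lem:boundSI}, invoke the stopping time to replace $\|R\|_{p,\cD_{T_\eps}}$ by $\eps^\gamma$, and compare the two surviving powers via $\gamma>\tfrac1p$. The only slight imprecision is your claim that the integrated differential term ``vanishes'': the terminal term $\tfrac12\|\nabla\psi(t)\|^2$ does not vanish but is nonnegative and is therefore dropped from the left-hand side, which is precisely the paper's (implicit) step as well.
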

%
%
\subsection{Final step}
%
%
In the final part of the proof it remains to show that
$T_\eps =T$ on our set of high probability. Thus, we shall use our
estimates of the previous two Lemmas to show that  $\|R\|_{p,\mathcal{D}}$ is not larger that
$\eps^\gamma$.

Observe first, that the
following trivial interpolation inequality holds true
\begin{equation}\label{e:interpol}
\|R\|_{2,\mathcal{D}}^2
=-\int_{\mathcal{D}}R\Delta\psi dx
=\int_{\mathcal{D}}\nabla R\nabla\psi dx
\leq\|\nabla R\|_{2,\mathcal{D}}\|\nabla\psi\|_{2,\mathcal{D}}.
\end{equation}

We use the Sobolev's embedding of $H^\alpha$ into $L^p$ with
$\alpha:= d(\frac12-\frac1p) = \frac{d(p-2)}{2p}$, and then
interpolate $H^\alpha$ between $L^2$ and $H^1$.
We need $\alpha  \in [0,1]$, which is
assured by $2 < p \leq 3 < \frac{2d}{(d-2)}.$ This gives,
\[\|R\|_{p,\cD}
\leq C \|R\|_{H^\alpha} \leq C \|R\|_{2,\cD}^{1-\alpha} \|\nabla
R\|_{2,\cD}^\alpha\;.
\]
Thus, using \eqref{e:interpol} we obtain
\[
\begin{split}
 \|R\|_{p,\cD}^p
& \leq C \|R\|_{2,\cD}^{\frac{2p-d(p-2)}{2}} \|\nabla
R\|_{2,\cD}^{\frac{d(p-2)}{2}} \\
&\leq
C \|\nabla\psi\|_{2,\cD}^{\frac{2p-d(p-2)}{4}}
\|\nabla R\|_{2,\cD}^{\frac{d(p-2)}{2}+ \frac{2p-d(p-2)}{4}}
= C  \|\nabla\psi\|_{2,\cD}^{\frac{2p-d(p-2)}{4}} \|\nabla
R\|_{2,\cD}^{\frac{d(p-2)+2p}{4}}\;.
\end{split}
 \]
 Integration yields
 \[
 \|R\|_{p,\cD_t}^p
\leq C\sup_{[0,t]}
 \|\nabla\psi\|_{2,\cD}^{\frac{2p-d(p-2)}{4}}
\cdot \|\nabla R\|_{2,\cD_t}^{\frac{d(p-2)+2p}{4}}.
 \]
Now we use \eqref{e:b11} and \eqref{e:b22} and arrive at
\begin{equation}\label{eqq2}
 \|R\|_{p,\cD}^p
\leq C\Big[ \eps^{p\gamma-1} +  \eps^{\sigma+\gamma-\kappa}
\Big]^{\frac{2p-d(p-2)}{8}} \Big[\eps^{-1-2/p + 2\gamma} +
\eps^{-1+\sigma+\gamma-\kappa}\Big]^{\frac{d(p-2)+2p}{8}} \;.
\end{equation}
Thus, pulling out $\eps^{2\gamma}$ from both brackets, and  as $\gamma-\sigma \geq 0$ and $\kappa$ \db{is} small, we arrive at
\[
\begin{split}
\eps^{-\gamma  } \|R\|_{p,\cD}
& \leq C\cdot
\Big[ \eps^{(p-2)\gamma-1} + \eps^{\sigma-\gamma-\kappa} \Big]^{\frac{2p-d(p-2)}{8p}}
\Big[\eps^{-1-2/p} + \eps^{-1+\sigma-\gamma-\kappa}\Big]^{\frac{d(p-2)+2p}{8p}} \\
& \leq C\cdot
\Big[ \eps^{(p-2)\gamma-1} + \eps^{\sigma-\gamma-\kappa} \Big]^{\frac{2p-d(p-2)}{8p}}
\cdot \eps^{ - (1+2/p)\frac{d(p-2)+2p}{8p} } \;.
\end{split}
\]
\dbz{By Lemmas \ref{lem:45} and \ref{lem:46} the}
 previous bound holds with probability larger than $1 - C_\ell\eps^\ell$.
In order to show that $T_\eps=T$ holds with high probability,
we need to prove that the right hand side of the previous equation is smaller than $1$.

As the second factor is larger than one, we need the first one to be smaller than $1$ and to compensate the larger factor.
Hence, we need
\[
\gamma > \frac1{p-2} \Big[ 1+ \frac{2p+d(p-2)}{2p-d(p-2)}\cdot \frac{p+2}{p}  \Big] > \frac1p
\]
and provided $\kappa$ is sufficiently small
\[
\sigma > \gamma + \frac{2p+d(p-2)}{2p-d(p-2)}\cdot \frac{p+2}{p}  > \gamma \;.
\]
Hence, the proof of theorem is now complete.

\subsection*{Acknowledgments.} Dimitra Antonopoulou and Georgia Karali are
supported by the ``ARISTEIA" Action of the ``Operational Program
Education and Lifelong Learning" co-funded by the European Social
Fund (ESF) and National Resources. {Dirk Bl\"omker thanks
MOPS at University of Augsburg for support.}

\end{document}